\newif\ifusepgfplot
\pgfplotsset{compat=1.17}
\DeclareMathOperator{\adj}{adj}
\DeclareMathOperator{\cond}{cond}
\newtheorem{theorem}{Theorem}
\newtheorem{lemma}[theorem]{Lemma}
\newtheorem{proposition}[theorem]{Proposition}
\newtheorem{example}{Example}
\newdefinition{definition}{Definition}
\newdefinition{remark}{Remark}
\newenvironment{customremark}[1]{%
  \remark
}{\endremark}
\newenvironment{customexample}[1]{%
  \example
}{\endexample}
\newcommand{\method}[3]{#1\text{DRK}#2\text{-}#3}
\newcommand{\amethod}[3]{A#1\text{DRK}#2\text{-}#3}
\newcommand{\dotPhi}[1]{\frac{\mathrm d^{#1}}{\mathrm dt^{#1}}{\Phi}}
\newcommand{\logLogSlopeTriangle}[7]
{

    \pgfplotsextra
    {
        \pgfkeysgetvalue{/pgfplots/xmin}{\xmin}
        \pgfkeysgetvalue{/pgfplots/xmax}{\xmax}
        \pgfkeysgetvalue{/pgfplots/ymin}{\ymin}
        \pgfkeysgetvalue{/pgfplots/ymax}{\ymax}

        \pgfmathsetmacro{\xArel}{#1}
        \pgfmathsetmacro{\yArel}{#3}
        \pgfmathsetmacro{\xBrel}{#1-(#5*#2)}
        \pgfmathsetmacro{\yBrel}{\yArel}
        \ifnum #6=1
			\pgfmathsetmacro{\xCrel}{\xArel}
		\else
			\pgfmathsetmacro{\xCrel}{\xBrel}
		\fi

        \pgfmathsetmacro{\lnxB}{\xmin*(1-(#1-(#5*#2)))+\xmax*(#1-(#5*#2))} 
        \pgfmathsetmacro{\lnxA}{\xmin*(1-#1)+\xmax*#1} 
        \pgfmathsetmacro{\lnyA}{\ymin*(1-#3)+\ymax*#3} 
        \pgfmathsetmacro{\lnyC}{\lnyA+#4*(\lnxA-\lnxB)}
        \pgfmathsetmacro{\yCrel}{\lnyC-\ymin)/(\ymax-\ymin)}

        \coordinate (A) at (rel axis cs:\xArel,\yArel);
        \coordinate (B) at (rel axis cs:\xBrel,\yBrel);
        \coordinate (C) at (rel axis cs:\xCrel,\yCrel);

		\ifnum #5=1
			\ifnum #6=1
				\def\loc{west};
			\else
				\def\loc{east};
			\fi
		\else
			\ifnum #6=1
				\def\loc{east};
			\else
				\def\loc{west};
			\fi
		\fi

        \ifnum #6 = 1
        	\draw[#7]	(A)--
            			(B)-- 
            			(C)-- node[pos=0.5,anchor=\loc] {#4}
           				cycle;
        \else
        	\draw[#7]	(A)--
            			(B)-- node[pos=0.5,anchor=\loc] {#4}
            			(C)-- 
           				cycle;
        \fi
    }
}
\definecolor{grassGreen}{RGB}{26,224,72}
\definecolor{mintGreen}{RGB}{73,186,142}
\definecolor{lightViolet}{RGB}{238,70,238}
\definecolor{lightPurple}{RGB}{171,0,255}
\definecolor{mediumPurple}{RGB}{103,0,154}
\begin{document}
\setlength{\emergencystretch}{1.6em}

\begin{frontmatter}



\title{Jacobian-free implicit MDRK methods for stiff systems of ODEs}


\author[inst1]{Jeremy Chouchoulis\orcidlink{0000-0003-3451-2568}}

\author[inst1]{Jochen Sch\"utz\orcidlink{0000-0002-6355-9130}} 

\affiliation[inst1]{organization={Faculty of Sciences \& Data Science Institute, Hasselt~University},
            addressline={Agoralaan~Gebouw~D},
            city={Diepenbeek},
            postcode={3590},
            country={Belgium}}

\begin{abstract}
In this work, an approximate family of implicit multiderivative Runge-Kutta (MDRK) time integrators for stiff initial value problems is presented. The approximation procedure is based on the recent Approximate Implicit Taylor method (Baeza et al. in Comput. Appl. Math. 39:304, 2020). As a Taylor method can be written in MDRK format, the novel family constitutes a multistage generalization. Two different alternatives are investigated for the computation of the higher order derivatives: either directly as part of the stage equation, or either as a separate formula for each derivative added on top of the stage equation itself. From linearizing through Newton's method, it turns out that the conditioning of the Newton matrix behaves significantly different for both cases. We show that direct computation results in a matrix with a conditioning that is highly dependent on the stiffness, increasing exponentially in the stiffness parameter with the amount of derivatives. Adding separate formulas has a more favorable behavior, the matrix conditioning being linearly dependent on the stiffness, regardless of the amount of derivatives. Despite increasing the Newton system significantly in size, through several numerical results it is demonstrated that doing so can be considerably beneficial.
\end{abstract}

\begin{keyword}
Multiderivative Runge-Kutta \sep Jacobian-free \sep ODE integrator
\MSC[2020] 65F35 \sep 65L04 \sep 65L05 \sep 65L06 \sep 65L12 \sep 65L20
\end{keyword}

\end{frontmatter}


\section{Introduction}\label{sec:intro}
We are interested in developing stable and efficient \textit{implicit} multiderivative time integrators, see, e.g.,  \cite{KastlungerWanner1972,HaWa73,Butcher2005,TC10,Seal13,BAEZA201887,2020_Baeza_EtAl,SealSchuetzZeifang21,2022_Gottlieb_EtAl} and the references therein, for stiff ordinary differential equations (ODEs)
\begin{equation}\label{eq:ODEsystem}
y'(t) = \Phi(y),
\end{equation}
where ${\Phi\colon \mathbb{R}^M \rightarrow \mathbb{R}^M}$ is the flux and $y: \mathbb R^{+} \rightarrow \mathbb{R}^M$ the unknown solution variable.
In our case, stiffness is introduced through a variable $\varepsilon \ll 1$ into the flux, which is given by
\begin{equation}\label{eq:ODE-stiffComponent}
\Phi_i(y) = f_i(y_1,\dots, y_M) + \frac{g_i(y_1,\dots, y_M)}{\varepsilon}, \quad 1 \leq i \leq M,
\end{equation}
for smooth functions $f_i$ and $g_i$ that do not explicitly dependent on $\varepsilon$. Multiderivative methods not only take into account the first derivative $y'(t)$, but as well higher order time derivatives
\[
y^{(k)}(t) := \frac{\mathrm{d}^k}{{\mathrm{d}t}^k} y(t) \, .
\]
By repeatedly making use of the ODE system \eqref{eq:ODEsystem}, and ignoring the $t-$dependency of $y^{(k)}$ for the ease of presentation, this leads to the formulas
\begin{subequations} \label{eq:FaaDiBruno-timeDer}
\begin{align}
y^{(2)} &= \Phi'(y)y^{(1)} \, , \\
y^{(3)}  &= \Phi''(y) \bullet \left[y^{(1)} {\color{gray}|} y^{(1)}\right] +  \Phi'(y)y^{(2)} \, , \label{eq:FaaDiBruno-order2} \\ 
y^{(4)} &= \Phi'''(y)\bullet \left[y^{(1)} {\color{gray}|} y^{(1)} {\color{gray}|} y^{(1)}\right] + 3\Phi''(y) \bullet \left[y^{(1)} {\color{gray}|} y^{(2)}\right] + \Phi'(y)y^{(3)} \, ,
\end{align}
\end{subequations}
and so forth. The bullet operator is the tensor action, i.e.,
\begin{equation}
\Phi''' \bullet \left[u {\color{gray}|} v {\color{gray}|} w\right] := \sum\limits_{j,k,l=1}^M \frac{\partial^3 \Phi}{\partial y_j\partial y_k\partial y_l} u_j v_k w_l \, ,
\end{equation}
where $u,v,w \in \mathbb{R}^{M}$.
Already at this introductory level, it can be seen that it is quite cumbersome to explicitly put all the terms used in \eqref{eq:FaaDiBruno-timeDer} into an algorithm.
Furthermore, plugging $\Phi_i(y)$ into \eqref{eq:FaaDiBruno-timeDer} reveals that $y^{(k)} = \mathcal{O}(\varepsilon^{-k})$.
As a result, the derivatives $y^{(k)}$ quickly tend to become extremely large with each added order of the derivative,  potentially leading to a huge disparity in values handled in a multiderivative solver. Therefore, one can expect the typical limitations associated to floating-point arithmetic. In particular, the algebraic system of equations that results from the nonlinear timescheme is strongly influenced. It is shown numerically in this work that the conditioning of the linearized equation system behaves as $\mathcal{O}(\varepsilon^{-k})$.


In 2018, Baeza et al. \cite{BAEZA201887} have constructed a recursive algorithm on the basis of centered finite differences that approximates the derivatives $y^{(k)}$ for a Taylor expansion, accordingly named the Approximate Taylor (AT) method. This approach directly stems from a recursive finite difference scheme that was designed for the circumvention of the Cauchy-Kovalevskaya procedure in the context of hyperbolic conservation laws \cite{ZorioEtAl}. In order to deal with stiffness and strict timestepping restrictions, more recently Baeza et al. \cite{2020_Baeza_EtAl} extended the AT method with an implicit variant, named the Approximate Implicit Taylor method. To simplify the computation of the Newton Jacobian, \cite{2020_Baeza_EtAl} suggests including additional equations into the ODE system for the calculation of the derivatives $y^{(k)}$. We show that this as well can improve the conditioning of the Jacobian compared to the $\mathcal{O}(\varepsilon^{-k})$ behavior that is achieved by directly incorporating \eqref{eq:FaaDiBruno-timeDer}.

In this work, we generalize the approximate implicit Taylor method to more general multiderivative Runge-Kutta (MDRK) schemes. This improves the solution quality significantly. While a Taylor method has order of convergence $\mathcal{O}(\Delta t^{k})$, with $k$ denoting the maximally used derivative, MDRK schemes can achieve the same order through less derivatives by incorporating more stages. Furthermore, we thoroughly investigate multiple methods to solve the resulting algebraic system of equations.
 Although  all methods are equivalent with \textit{infinite} machine precision, we observe that numerically, the methods differ quite significantly.

First, in Sect.~\ref{sec:MDRK} traditional MDRK time integrators for ODEs are introduced, highlighting the variety of ways to compute the time derivatives $y^{(k)}$, among which a review of the AT procedure is given. Next, Sect.~\ref{sec:Newton_Stability} is devoted to understanding the stability of the linear system obtained from applying Newton's method. In settings with timesteps large compared to the stiffness parameter $\varepsilon$, we show that the Newton Jacobian has a condition number that grows exponentially with the amount of derivatives. As an alternative for the traditional MDRK approach, in Sect.~\ref{sec:DerSol}, along the lines of the approximate implicit Taylor method, we introduce the MDRK-DerSol approach, where the derivatives are computed as solution variables via new relations in a larger ODE system. We verify numerically that the Newton Jacobian of this bigger system has a more favorable conditioning asymptotically for $\varepsilon$ going to $0$. Finally, our conclusions are summarized and future endeavors are explored in Sect.~\ref{sec:conclusion}.

\section{Implicit multiderivative Runge-Kutta solvers}\label{sec:MDRK}
In order to apply a time-marching scheme to Eq.~\eqref{eq:ODEsystem}, we discretize the temporal domain with a constant\footnote{
A fixed $\Delta t$ is used for solving any ODE system described within this work. Nevertheless, all presented methods can readily be applied with a variable timestep ${\Delta t}^n$ if needed.
} timestep $\Delta t$
and iterate $N$ steps such that $\Delta t = T_\text{end} / N$. Consequently, we define the time levels by
\begin{equation*}\label{eq:time-discr}
t^n := n\Delta t \, \qquad 0 \leq n \leq N.
\end{equation*}
The central class of time integrators in this work are \emph{implicit} MDRK methods. By adding extra temporal derivatives of $\Phi(y)$, these form a natural generalization of classical implicit Runge-Kutta methods.
To present our ideas, let us formally define the MDRK scheme as follows:
\begin{definition}[Kastlunger, Wanner {\cite[Section 1]{KastlungerWanner1972}}]\label{def:MDRK-scheme}
A $q$-th order implicit $\mathtt{r}$-derivative Runge-Kutta scheme using $\mathtt{s}$ stages ($\method{\mathtt{r}}{q}{\mathtt{s}}$) is any method which can, for given coefficients $a_{{\color{BlueGreen}l}{\color{VioletRed}\nu}}^{(k)}$, $b^{(k)}_{\color{BlueGreen}l}$ and $c_{\color{BlueGreen}l}$, be formalized as
\begin{subequations}
\begin{equation}\label{eq:MDRK-ODE-stages}
{y}^{n,{\color{BlueGreen}l}} := {y}^n + \sum\limits_{k=1}^{\mathtt{r}} {\Delta t}^{k} \sum\limits_{{\color{VioletRed}\nu}=1}^{\mathtt{s}} a_{{\color{BlueGreen}l}{\color{VioletRed}\nu}}^{(k)}\dotPhi{k-1}\left({y}^{n,{\color{VioletRed}\nu}} \right), \quad
{\color{BlueGreen}l} = 1,\dots,\mathtt{s},
\end{equation}
where ${y}^{n,{\color{BlueGreen}l}}$ is a stage approximation of $y$ at time $t^{n,{\color{BlueGreen}l}} := t^n + c_{\color{BlueGreen}l} \Delta t$. The update is given by
\begin{equation}\label{eq:MDRK-ODE-update}
{y}^{n+1} := {y}^n + \sum\limits_{k=1}^{\mathtt{r}} {\Delta t}^{k} \sum\limits_{{\color{BlueGreen}l}=1}^{\mathtt{s}} b^{(k)}_{\color{BlueGreen}l}\dotPhi{k-1}({y}^{n,{\color{BlueGreen}l}}) \, . \quad \phantom{\quad
{\color{BlueGreen}l} = 1,\dots,\mathtt{s}}
\end{equation}
\end{subequations}
Typically, the values of $a_{{\color{BlueGreen}l}{\color{VioletRed}\nu}}^{(k)}$, $b^{(k)}_{\color{BlueGreen}l}$ and $c_{\color{BlueGreen}l}$ are summarized in an extended Butcher tableau, see \ref{app:ButherTableaux} for some examples.
\end{definition}
As can be seen from Eq.~\eqref{eq:FaaDiBruno-timeDer}, at least the $k$-th order Jacobian tensor
\begin{equation}
\Phi^{k}(y) = \frac{\partial^k \Phi}{\partial y^k}(y) \, 
\end{equation}
is needed for the derivation of $\dotPhi{k}$. For systems of ODEs, $\Phi^{k}$ is an $M \times \hdots \times M$ ($k$-times) tensor.
The generalization of \eqref{eq:FaaDiBruno-order2}, named Fa\'a Di Bruno's formula (see \cite[Prop.~1]{2020_Baeza_EtAl}), can therefore be very expensive.
A more sensible way to obtain the time derivatives of $\Phi$ 
is from the recursive relation
\begin{equation}\label{eq:timeDerRecursive}
\dotPhi{k}(y) = {\left[\frac{\mathrm{d}^{k-1}\Phi(y)}{{\mathrm{d}t}^{k-1}}\right]}'y^{(1)}\, .
\end{equation}
The prime symbol here denotes the Jacobian derivative with respect to $y$. Here, the quantities ${\left[\frac{\mathrm{d}^{k-1}\Phi}{{\mathrm{d}t}^{k-1}}\right]}'$ are $M \times M$ matrices, regardless of $k$.

\begin{remark}\label{rem:mathEquivFaaDiBrunoRecursive}
Although Fa\'a Di Bruno's formula is mathematically equivalent to the recursive relation \eqref{eq:timeDerRecursive}, numerical results do in actuality differ. Due to the many tensor actions with $\Phi^k$ in Fa\'a Di Bruno's formula, numerical computations are much more prone to round-off errors. To illustrate this, we will apply both Fa\'a Di Bruno's formula, as in \eqref{eq:FaaDiBruno-timeDer}, and the recursive relation \eqref{eq:timeDerRecursive}. We refer to the tensors $\Phi^k$ with the wording ``Exact Jacobians'' (EJ) from hereon.
\end{remark}


\subsection{Approximating the time derivatives}

Despite the availibility of the recursive relation \eqref{eq:timeDerRecursive}, the Jacobian derivatives w.r.t. $y$ within this relation can nevertheless be quite intricate to deal with. And as such, avoiding Jacobian derivation by hand often leads to the use of symbolic computing software to allow the user to focus directly on the numerical procedure. There are two major downsides here, first, it being that symbolic software is computationally expensive, and secondly, not always feasible to apply. For large numerical packages for example, generally it is not desirable to significantly alter vital portions of code. To overcome symbolic procedures completely, a high-order centered differences approximation strategy has recently been developed by Baeza et al. \cite{BAEZA201887, 2020_Baeza_EtAl} to obtain values
\begin{align}\label{eq:approximateDerivatives}
\begin{split}
\widetilde{y}^{(k)} &= y^{(k)}  + \mathcal{O}({\Delta t}^{\mathtt{r} -k + 1})
= \frac{\mathrm{d}^{k-1}}{{\mathrm{d}t}^{k-1}}\Phi(y) + \mathcal{O}({\Delta t}^{\mathtt{r} -k + 1}) \, ,
\end{split}
\end{align}
for $k = 2, \dots, \mathtt{r}$. An overview of the method is given here; first, necessary notation is introduced.
%
\begin{definition}
For any number $p\in \mathbb{N}$, define the locally centered stencil function having $2p+1$ nodes by means of angled brackets
\begin{equation}\label{eq:stencilFunction}
\langle\cdot\rangle \colon \mathbb{Z} \to \mathbb{Z}^{2p+1} \colon z \mapsto \begin{pmatrix} z-p, & \dots, & z+p \end{pmatrix} ^T \, .
\end{equation}
\end{definition}
In this manner it is possible to write the vectors
\begin{equation}\label{eq:stencilVector}
\mathbf{y}^{\langle n \rangle} := \begin{pmatrix}
y^{n-p} \\
\vdots \\
y^{n+p}
\end{pmatrix} \quad \text{and} \quad
y(\mathbf{t}^{\langle n \rangle}) := \begin{pmatrix}
y(t^{n-p}) \\
\vdots \\
y(t^{n+p})
\end{pmatrix} \, .
\end{equation}
Such representation allows us to concisely write down approximations $\widetilde{y}^{(k)}$ to $y^{(k)}$. Let $k=1,\dots,\mathtt{r}$ be the derivative order of interest which we would like to approximate.
\begin{lemma}[Carrillo, Par\'{e}s {\cite[Proposition 4]{CarrilloPares2019}}, Zor\'{\i}o et al. {\cite[Proposition 2]{ZorioEtAl}}]
For $k\geq1$ and $p \geq \lfloor\frac{k+1}{2}\rfloor$ ($p\in \mathbb{N}$), there exist $2p+1$ quantities $\delta^k_{p,j}\in \mathbb{R}$ for $j = -p,\dots,p$, such that the linear operator
\begin{equation}\label{eq:IntPolyCentDerLinear}
P^{(k)} \colon  \mathbb{R}^{2p+1} \rightarrow \mathbb{R}, \qquad \mathbf{v} \mapsto \frac{1}{{\Delta t}^{k}} \sum\limits_{j=-p}^p \delta^k_{p,j}v_j 
\end{equation}
approximates the $k$-th derivative up to order $\omega := 2p - 2\lfloor\frac{k-1}{2}\rfloor$, \emph{i.e.}
\begin{equation}
P^{(k)} y(\mathbf{t}^{\langle n \rangle})  = y^{(k)}(t^{n})  + \mathcal{O}({\Delta t}^{\omega}) \, .
\end{equation}
\end{lemma}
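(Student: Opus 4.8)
The plan is to determine the weights $\delta^k_{p,j}$ by imposing exactness (moment) conditions derived from a Taylor expansion, and then to read off the order of accuracy from the leading unmatched moment. First I would expand each nodal value about $t^n$,
\[
y(t^{n+j}) = \sum_{m\geq 0} \frac{(j\,\Delta t)^m}{m!}\, y^{(m)}(t^n),
\]
and substitute this into $P^{(k)} y(\mathbf{t}^{\langle n\rangle})$. Collecting terms by derivative order shows that the coefficient multiplying $y^{(m)}(t^n)$ equals $\tfrac{\Delta t^{m-k}}{m!}\,\mu_m$, where $\mu_m := \sum_{j=-p}^{p}\delta^k_{p,j}\,j^m$ is the $m$-th discrete moment of the stencil weights. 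Reproducing $y^{(k)}(t^n)$ exactly then amounts to enforcing $\mu_m = k!\,\delta_{mk}$ (Kronecker delta) for as many $m$ as the $2p+1$ unknowns permit.

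The decisive step, and the one that pins down the precise value of $\omega$, is to exploit the parity of the centred stencil rather than to solve the full $(2p+1)\times(2p+1)$ Vandermonde system blindly. I would seek weights of definite symmetry: symmetric $\delta_{-j}=\delta_j$ when $k$ is even, antisymmetric $\delta_{-j}=-\delta_j$ (forcing $\delta_0=0$) when $k$ is odd. With this choice every moment of the ``wrong'' parity vanishes identically, so only same-parity conditions remain. For even $k$ this leaves $p+1$ symmetric weights constrained by the even-order conditions $m\in\{0,2,\dots,2p\}$; for odd $k$ it leaves $p$ antisymmetric weights constrained by the odd-order conditions $m\in\{1,3,\dots,2p-1\}$. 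Each reduced system is a Vandermonde system in the squared nodes $j^2$ (with an extra factor $j$ per column in the odd case), hence nonsingular, which yields existence and uniqueness of the $\delta^k_{p,j}$. The hypothesis $p\geq\lfloor\frac{k+1}{2}\rfloor$ is exactly what guarantees that the target index $m=k$ lies in the imposed range, i.e. $k\leq 2p$ for even $k$ and $k\leq 2p-1$ for odd $k$, so the right-hand side $k!\,\delta_{mk}$ is nontrivial and the construction is consistent.

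Finally I would identify the leading error. Since all same-parity moments up to the last imposed one are annihilated (except the $m=k$ term, which produces $y^{(k)}$), and all opposite-parity moments vanish by symmetry, the first surviving contribution is the next same-parity moment: $m=2p+2$ for even $k$ and $m=2p+1$ for odd $k$. This contributes an error of size $\mathcal{O}(\Delta t^{m-k})$, namely $\mathcal{O}(\Delta t^{2p+2-k})$ and $\mathcal{O}(\Delta t^{2p+1-k})$ respectively; in both cases this equals $\mathcal{O}(\Delta t^{\omega})$ with $\omega = 2p - 2\lfloor\frac{k-1}{2}\rfloor$, which establishes the claim.

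The main obstacle I anticipate is not the solvability of the linear system, which is a routine Vandermonde nonsingularity argument, but rather justifying the symmetry-induced gain of one order for even derivatives. A naive count matching all $2p+1$ moments yields only order $2p+1-k$; recovering the sharper exponent $\omega$ (one order higher precisely when $k$ is even) hinges on recognising that imposing definite parity forces every opposite-parity moment, including the would-be leading error term, to vanish automatically. Since the statement only asserts an $\mathcal{O}(\Delta t^{\omega})$ bound, it suffices to show that all error contributions below order $\omega$ cancel; confirming that the leading same-parity moment is genuinely nonzero would additionally give sharpness, but is not needed for the stated estimate.
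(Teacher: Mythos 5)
Your proposal is correct. Note first that the paper itself does not prove this lemma: it is imported verbatim from Carrillo--Par\'es and Zor\'{\i}o et al., where $P^{(k)}$ is obtained as the $k$-th derivative, evaluated at the central node $t^n$, of the degree-$2p$ Lagrange interpolating polynomial through $\{t^{n-p},\dots,t^{n+p}\}$ (hence the label \texttt{IntPolyCentDer}); the order statement then follows from the error formula for central numerical differentiation, whose leading term vanishes at the midpoint when $k$ is even. Your route is a direct moment-matching construction instead, and it is sound: imposing $\sum_j \delta^k_{p,j} j^m = k!\,\delta_{mk}$ and exploiting the parity ansatz $\delta_{-j}=(-1)^k\delta_j$ reduces the problem to a nonsingular Vandermonde system in the distinct nodes $j^2$ (with node $0$ included in the even case, which is harmless), and the hypothesis $p\geq\lfloor\tfrac{k+1}{2}\rfloor$ is, as you say, exactly the condition that the target moment $m=k$ is among those imposed. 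Your error bookkeeping is also right: every moment of order $m<\omega+k$ with $m\neq k$ is annihilated either by construction or by parity, giving $\mathcal{O}(\Delta t^{2p+2-k})$ for even $k$ and $\mathcal{O}(\Delta t^{2p+1-k})$ for odd $k$, both of which equal $\mathcal{O}(\Delta t^{\omega})$ with $\omega=2p-2\lfloor\tfrac{k-1}{2}\rfloor$. Since all $2p+1$ moment conditions end up satisfied, your weights coincide with the interpolation-based ones by uniqueness, so the two proofs produce the same operator; yours has the advantage of making the parity-induced extra order for even $k$ completely explicit, at the cost of having to argue nonsingularity of the reduced systems by hand. The only cosmetic gap is that the Taylor expansion should be truncated at order $\omega+k$ with a Lagrange remainder (requiring $y\in C^{\omega+k}$) rather than written as an infinite series, but this is routine.
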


The linear operator $P^{(k)}$, however, is difficult to apply in practice, since it introduces additional unknown values $y(t^{n+1}), \dots, y(t^{n+p})$ into the stencil. In order to bypass the issue of creating more unkowns, in \cite{BAEZA201887, 2020_Baeza_EtAl, ZorioEtAl} a recursive strategy that includes Taylor approximations into the centered difference operator is incorporated. This gives the following computations of the values $\widetilde{y}^{(k)}$:
\begin{align}\label{eq:timeDerApprox}
\begin{split}
 \widetilde{y}^{(1)} &:= \Phi(y^{n}),\\
 \widetilde{y}^{(k)} &:= P^{(k-1)}\mathbf{\Phi}^{k-1, \langle n \rangle}_T, \quad 2 \leq k \leq \mathtt{r} \, ,
\end{split}
\end{align}
in which
\begin{equation}\label{eq:approxPhiValues}
\Phi_T^{k-1,n+j} := \Phi\left(y^{n} + \sum\limits_{m=1}^{k-1}\frac{(j\Delta t)^m}{m!} \widetilde{y}^{(m)}\right)
\end{equation}
is an approximation to $\Phi\!\left(y(t^{n+j})\right)$. By adopting the recursive Taylor approach \eqref{eq:timeDerApprox}-\eqref{eq:approxPhiValues} into Def. \ref{def:MDRK-scheme}, we acquire a novel family of time-marching schemes:
\begin{definition}[AMDRK method]\label{def:AMDRK-scheme}
The $\method{\mathtt{r}}{q}{\mathtt{s}}$ scheme (Def. \ref{def:MDRK-scheme}) in which the time derivatives $\dotPhi{k-1}(y)$ are approximated by using the formulas  \eqref{eq:timeDerApprox}-\eqref{eq:approxPhiValues} is called the \emph{Approximate} MDRK method, denoted by the short-hand notation $\amethod{\mathtt{r}}{q}{\mathtt{s}}$.
\end{definition}
Without proof -- it is very similar to related cases, see for example \cite{BAEZA201887} in the context of explicit Taylor schemes for ODEs and \cite{2021_Chouchoulis_EtAl} for explicit MDRK schemes applied to hyperbolic PDEs -- we state the order of convergence:
\begin{theorem}\label{thm:AMDRK-consistency}
The consistency order of an $\amethod{\mathtt{r}}{q}{\mathtt{s}}$ method is ${\min(2p+1, q)}$, the variable $q$ being the consistency order of the underlying MDRK method, and $p$ denoting the use of the $2p+1$ points $\left\{t^{n-p}, \dots, t^{n+p} \right\}$ in Eq.~\eqref{eq:timeDerApprox}.
\end{theorem}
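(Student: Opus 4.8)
The plan is to compare the approximate update produced by the $\amethod{\mathtt{r}}{q}{\mathtt{s}}$ scheme against the update of the underlying exact-derivative $\method{\mathtt{r}}{q}{\mathtt{s}}$ method and to bound the discrepancy. By hypothesis the latter has consistency order $q$, i.e.\ its local truncation error is $\mathcal O(\Delta t^{q+1})$; hence it suffices to show that replacing every occurrence of $\dotPhi{k-1}(\cdot)$ in the stage and update equations \eqref{eq:MDRK-ODE-stages}--\eqref{eq:MDRK-ODE-update} by the approximation $\widetilde y^{(k)}$ of \eqref{eq:timeDerApprox}--\eqref{eq:approxPhiValues} perturbs the one-step map by at most $\mathcal O(\Delta t^{2p+2})$. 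Adding the two contributions then yields a local truncation error of $\mathcal O(\Delta t^{\min(q,2p+1)+1})$, which is precisely consistency order $\min(2p+1,q)$: when $q\le 2p+1$ the method error dominates, and when $q>2p+1$ the approximation error does.

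First I would establish, by induction on $k$, the derivative estimate
\begin{equation*}
\widetilde y^{(k)} = y^{(k)} + \mathcal O\!\left(\Delta t^{\,2p+2-k}\right), \qquad k = 1,\dots,\mathtt{r},
\end{equation*}
so that the weighted terms $\Delta t^{k}\,(\widetilde y^{(k)}-y^{(k)})$ appearing in \eqref{eq:MDRK-ODE-update} are each $\mathcal O(\Delta t^{2p+2})$. The base case $k=1$ is exact, since $\widetilde y^{(1)}=\Phi(y^n)=y^{(1)}$. For the inductive step I would insert the Taylor-polynomial error of \eqref{eq:approxPhiValues}, i.e.\ $\Phi_T^{k-1,n+j}-\Phi(y(t^{n+j}))$, whose argument deviates from $y(t^{n+j})$ by the truncation tail $\mathcal O(\Delta t^{k})$ (leading term a multiple of $j^{k}\Delta t^{k}\,y^{(k)}$) together with the propagated lower-order errors $\sum_{m<k}\tfrac{(j\Delta t)^m}{m!}(\widetilde y^{(m)}-y^{(m)})$, each already of order $\Delta t^{2p+2}$ by the inductive hypothesis. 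Applying the operator $P^{(k-1)}$ of \eqref{eq:IntPolyCentDerLinear} to the exact-value part and invoking its accuracy order $\omega = 2p-2\lfloor\tfrac{k-2}{2}\rfloor$ (the Carrillo--Par\'es / Zor\'io et al.\ lemma) then furnishes the claimed order.

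The delicate point --- and the step I expect to be the main obstacle --- is the $\Delta t^{-(k-1)}$ prefactor carried by $P^{(k-1)}$, which a priori threatens to amplify the $\mathcal O(\Delta t^{k})$ Taylor tail into an $\mathcal O(\Delta t)$ error. The resolution is the polynomial-reproduction (moment) property of the stencil weights $\delta^{k-1}_{p,j}$: since $P^{(k-1)}$ reproduces the $(k-1)$-st derivative exactly on low-degree polynomials, it annihilates the corresponding monomials in $j$ of the truncation tail, so that after multiplication by $\Phi'$ and the $\Delta t^{-(k-1)}$ factor these terms re-enter only at the order predicted by $\omega$. Tracking this cancellation carefully --- and checking that the propagated contributions from lower $m$ remain subordinate --- is the technical heart of the induction.

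Finally, because the scheme is implicit, the stage values $y^{n,l}$ themselves depend on the approximate derivatives through the coupled system \eqref{eq:MDRK-ODE-stages}. I would close the argument with a standard perturbation/contraction estimate: for $\Delta t$ sufficiently small the stage map is a contraction (the nonlinear coupling enters only with positive powers of $\Delta t$), so the implicit function theorem guarantees that the $\mathcal O(\Delta t^{2p+2})$ perturbation of the derivative evaluations produces a stage perturbation of the same order. Feeding this into \eqref{eq:MDRK-ODE-update} and combining with the exact-derivative order-$q$ estimate yields the local truncation error $\mathcal O(\Delta t^{\min(q,2p+1)+1})$, which establishes the asserted consistency order $\min(2p+1,q)$.
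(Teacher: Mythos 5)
The paper offers no proof of Theorem~\ref{thm:AMDRK-consistency} to compare against: the statement is explicitly given ``without proof,'' with a pointer to \cite{BAEZA201887} and \cite{2021_Chouchoulis_EtAl}. Your outline is essentially the standard argument of those references and I believe it is sound: splitting the local error into the $\mathcal O(\Delta t^{q+1})$ contribution of the exact $\method{\mathtt{r}}{q}{\mathtt{s}}$ scheme plus the perturbation caused by replacing $\dotPhi{k-1}$ with $\widetilde y^{(k)}$, the inductive estimate $\widetilde y^{(k)}=y^{(k)}+\mathcal O(\Delta t^{2p+2-k})$ (correctly sharper than the paper's own display \eqref{eq:approximateDerivatives}, whose $\mathcal O(\Delta t^{\mathtt r-k+1})$ would not suffice for multistage schemes with $q>\mathtt r$), and the closing implicit-function/perturbation step for the coupled stages are exactly the right ingredients, and they combine to $\mathcal O(\Delta t^{\min(q,2p+1)+1})$ as you say. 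Two caveats on the ``technical heart.'' First, taming the $\Delta t^{-(k-1)}$ amplification by moment conditions requires annihilating not just the linear term $\Phi'\cdot e_j$ of the expansion about $y(t^{n+j})$ but the entire nonlinear expansion together with the $j$-dependence of the evaluated derivatives of $\Phi$; each resulting contribution is, order by order in $\Delta t$, a polynomial in $j$ of degree at least $k$ that is either annihilated by the polynomial exactness of $P^{(k-1)}$ or already carries a sufficiently high power of $\Delta t$, so the argument closes, but the bookkeeping is heavier than your sketch suggests. A cleaner route is to note that $\Phi_T^{k-1,n+j}=G(t^{n+j})$ for the single smooth function $G(t):=\Phi\bigl(y^n+\sum_{m=1}^{k-1}\tfrac{(t-t^n)^m}{m!}\widetilde y^{(m)}\bigr)$, apply the Carrillo--Par\'es/Zor\'io lemma directly to $G$, and compare $G^{(k-1)}(t^n)$ with $\tfrac{\mathrm d^{k-1}}{\mathrm dt^{k-1}}\Phi(y(t))\big|_{t^n}$ via Fa\'a di Bruno, which only sees the already-controlled errors $\widetilde y^{(m)}-y^{(m)}$ for $m\le k-1$. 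Second, you should record the standing hypotheses that $p\ge\lfloor k/2\rfloor$ for all $k\le\mathtt r$ (so the stencil operators exist) and the localizing assumption $y^n=y(t^n)$ that makes your base case exact.
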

\begin{remark}
Note that the variable $p$ is not defined in the terminology $``\amethod{\mathtt{r}}{q}{\mathtt{s}}"$. Since the consistency order is $\min(2p+1, q)$, the optimal choice w.r.t. computational efficiency is to set $p = \lfloor q/2 \rfloor$. Throughout this paper $p$ is chosen along this line of reasoning for all numerical results.
\end{remark}

\begin{figure}
\centering
\resizebox{0.85\textwidth}{!}{%
	\ifthenelse{\boolean{compilefromscratch}}{
        \tikzsetnextfilename{AMDRK-scheme}  
		\input{figs_tikz/AMDRK-scheme.tikz}
	  }
	  {
	  \includegraphics{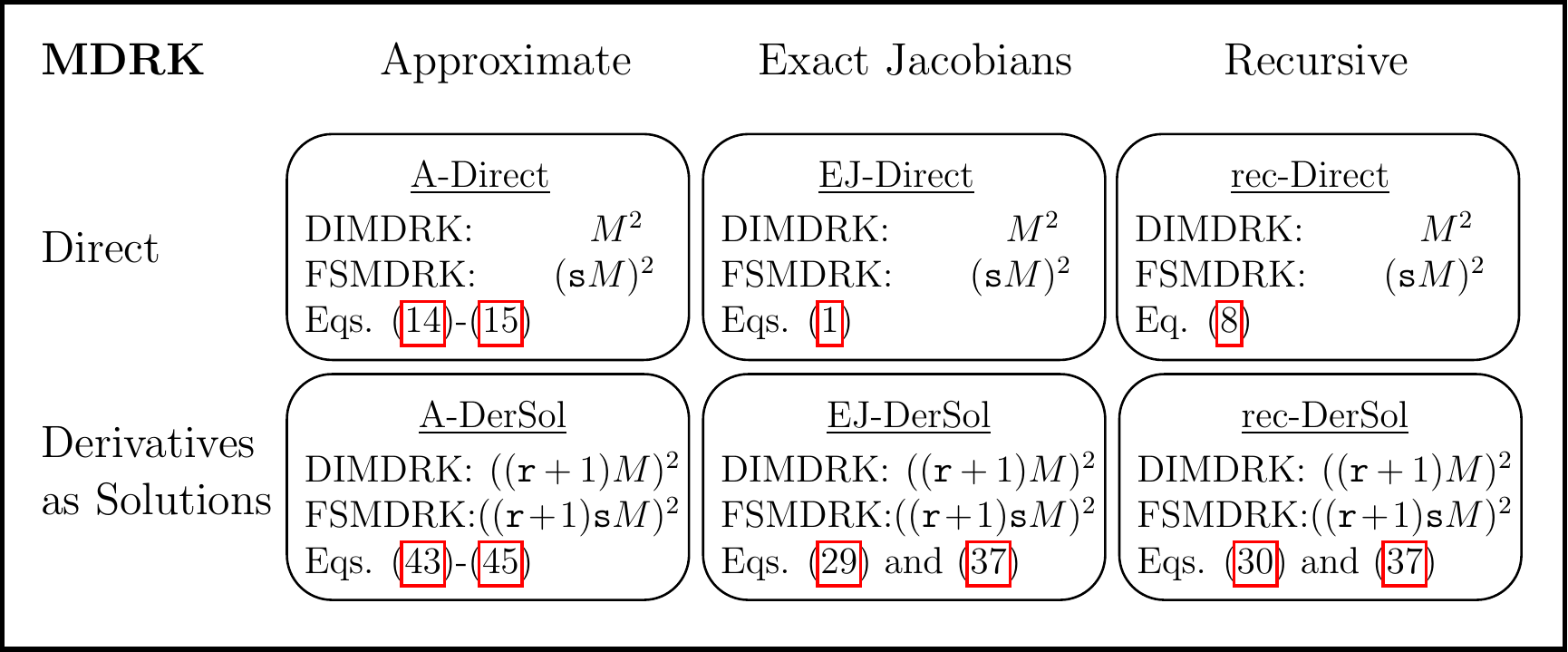}
	  }
}
	\caption{An overview of the different MDRK approaches applied in this work. In here $M$ represents the size of the ODE system being solved, $\mathtt{r}$ and $\mathtt{s}$ represent the amount of derivatives and the amount of stages, respectively, of the MDRK scheme (Def. \ref{def:MDRK-scheme}). For both DIMDRK and FSMDRK schemes (Subsect. \ref{subseq:specific-schemes}) the size of the linear system resulting from applying Newton's method is displayed.
	In comparison to the other Direct approaches, EJ-Direct neccessitates the most computations through tensor calculations, making it less fit for efficient time integration. Moreover, we found that using rec-Dersol leads to algebraic systems with extremely large condition numbers in relation to the other DerSol procedures; rec-Dersol hence turned out to be less suited for stiff equations than the others.
	}
	\label{fig:AMDRK-scheme}
\end{figure}

In the first row of Figure \ref{fig:AMDRK-scheme} an overview of the thus far presented MDRK methods is given, with differences focused around the computations of the derivatives $y^{(k)}$. As all of these MDRK methods exclusively solve the Eqs. \eqref{eq:MDRK-ODE-stages}-\eqref{eq:MDRK-ODE-update}, we refer to them as ``Direct'' in this work.

\subsection{Specific MDRK schemes}\label{subseq:specific-schemes}
As is the case for standard Runge-Kutta methods, the \mbox{(A)$\mathtt{r}$DRK$q$-$\mathtt{s}$} method has a lot of flexibility in choosing the coefficients $a_{{\color{BlueGreen}l}{\color{VioletRed}\nu}}^{(k)}$ and $b^{(k)}_{\color{BlueGreen}l}$. We put spotlight on two varying implementations of the (A)$\mathtt{r}$DRK$q$-$\mathtt{s}$ method:
\begin{itemize}
\item \textit{Full Storage MDRK (FSMDRK)}: \\
Under the assumption that the Butcher tableau consists of dense matrices, all $\mathtt{s}$ stages should be solved for simultaneously. This approach can be applied for any existing MDRK scheme, but might not be efficient as it often leads to large systems of equations.

\item \textit{Diagonally Implicit MDRK (DIMDRK)}:\\ If each stage ${\color{BlueGreen}l}$ only depends on previous stages ${\color{VioletRed}\nu} = 1, \dots, {\color{BlueGreen}l}-1$, and is only implicit in itself, it can be more efficient to solve for each stage one at a time.
\end{itemize}
In \ref{app:ButherTableaux} the extended Butcher tableaux used in this paper are displayed, three families are considered:
\begin{itemize}
\item \textit{Taylor schemes (Tables \ref{tab:explTaylor}-\ref{tab:implTaylor})}: \\
The explicit and implicit Taylor method can be reformulated as a single-stage MDRK scheme. This respectively leads to the Approximate Explicit Taylor methods in \cite{BAEZA201887} and the Approximate Implicit Taylor methods in \cite{2020_Baeza_EtAl}. Having only one stage, the FSMDRK and DIMDRK approaches are equivalent.

\item \textit{Hermite-Birkhoff (HB) schemes (Tables \ref{tab:HB-I2DRK4-2s}-\ref{tab:HB-I4DRK8-2s})} :\\
The coefficients are obtained from the Hermite-Birkhoff quadrature rule \cite{SealSchuetzZeifang21, QuarteroniNumA} which integrates a Hermite polynomial that also takes derivative data into account, with possibly a varying amount of derivative data per point. By taking equispaced abscissa $c_{{\color{BlueGreen}l}}$, the resulting tableau is fully implicit whilst having a fully explicit first stage. Hence, for $\mathtt{s} > 2$, stages $2$ to $\mathtt{s}$ should be solved with an FSMDRK method.

\item \textit{Strong-Stability Preserving (SSP) schemes (Tables \ref{tab:SSP-I2DRK3-2s}-\ref{tab:SSP-I2DRK4-5s})}: \\
In \cite{2022_Gottlieb_EtAl}, Gottlieb et al. have constructed implicit multiderivative SSP schemes.
The tableaux are diagonally implicit, and therefore each stage can be solved for one after another. Both the DIMDRK and FSMDRK approach are thus valid, with the DIMDRK approach likely being more efficient.
\end{itemize}

\subsection{Nonlinear solver}
The implicit (A)MDRK scheme (Defs. \ref{def:MDRK-scheme} and \ref{def:AMDRK-scheme}) requests a nonlinear solver, irrespective of whether the derivatives are either calculated exactly through \eqref{eq:FaaDiBruno-timeDer}, recursively obtained with \eqref{eq:timeDerRecursive} or approximated by means of \eqref{eq:timeDerApprox}-\eqref{eq:approxPhiValues}. In case that a single stage $Y={y}^{n,{\color{BlueGreen}l}}$ (with ${\color{BlueGreen}l} = 1, \dots, \mathtt{s}$) is considered, as for DIMDRK schemes, or all the unknown stages are combined into a single vector $Y = \left({y}^{n,1}, \dots, {y}^{n,\mathtt{s}}\right)$ as in the FSMDRK approach, it is possible to write the stage equation(s) \eqref{eq:MDRK-ODE-stages} as
\begin{equation}\label{eq:nonlinear_system}
F(Y) = 0 \, ,
\end{equation}
and then choose any nonlinear solver of preference. Computationally, solving Eq. \eqref{eq:nonlinear_system} is the most expensive portion of the numerical method. Hence, it is vital for the efficiency of the overall method to well understand the behavior of the selected solver. In this paper we use Newton's method, and thus require the Jacobian matrix $F'(Y)$. Given an initial value $Y^{[0]}$, the linearized system
\begin{equation}\label{eq:linearized_newton}
F'(Y^{[i]}) {\Delta Y}^{[i]} = - F(Y^{[i]}), \quad Y^{[i+1]} = Y^{[i]} + {\Delta Y}^{[i]}
\end{equation}
is solved for $i = 0, \dots, N_{\text{iter}}-1$ or until some convergence criteria are satisfied. In this work, criteria are invoked on the residuals,
\begin{equation}\label{eq:criteria_newton}
{\| F(Y^{[i]}) \|}_2 < 10^{-n_{\text{tol}}} \quad \text{or} \quad \frac{{\| F(Y^{[i]}) \|}_2}{{\| F(Y^{[0]}) \|}_2} < 10^{-n_{\text{tol0}}} \, ,
\end{equation}
where $n_{\text{tol}}, n_{\text{tol0}} \in \mathbb{N}$. Under the assumptions that $Y^{[0]}$ is in a neighborhood close enough to the exact solution $Y$, and the Jacobian matrix is nonsingular, Newton's method converges quadratically \mbox{\cite[Theorem 7.1]{QuarteroniNumA}}.
%

\begin{remark}
In what follows, we avoid the superscript index $i$ whenever possible, and instead write $F'(Y)$ or even $F'$.
\end{remark}


\section{Newton stability of direct (A)MDRK methods}\label{sec:Newton_Stability}
In order to investigate the conditioning of the Newton Jacobian $F'(Y)$ in the linearized Newton system \eqref{eq:linearized_newton},  we consider the Pareschi-Russo (PR) problem \cite{pareschi2000implicit},  given by
\begin{equation}\label{eq:PR}
 y'_1(t) = -y_2, \qquad y'_2(t) = y_1 + \frac{\sin(y_1) - y_2}{\varepsilon}, \qquad y(0) = \left(\frac{\pi}{2}, 1 \right).
\end{equation}
Let us first verify that the consistency order given in Theorem \ref{thm:AMDRK-consistency} is achieved and compare it with the exact MDRK method as in Def. \ref{def:MDRK-scheme} (using relation \eqref{eq:timeDerRecursive}). In Figure \ref{fig:Pareschi-Russo_eps_1_convergence_AT_direct}, convergence plots are shown for five different MDRK schemes (three Hermite-Birkhoff and two SSP, see \ref{app:ButherTableaux}). Final time is set to $T_{\text{end}} = 5$; the coarsest computation uses $N = 4$ timesteps. To separate convergence order from stiffness, $\varepsilon$ is set to 1.
\begin{figure}[h!]
\pgfplotsset{
	legend style={font=\footnotesize}
}
\centering
\hspace{1cm}\fbox{PR with $\varepsilon=1$}
\begin{subfigure}{.41\textwidth}
	\centering
	\ifthenelse{\boolean{compilefromscratch}}{
        \tikzsetnextfilename{Convergence_DIMDRK_AT_PR_Tend=5_eps=1}  
		\begin{tikzpicture}
		\begin{loglogaxis}[
			title={AMDRK methods},
			height=6cm,
			width=8cm,
			xlabel={$\Delta t$},
			ylabel={$l_2$-error},
		    ymax = 0.5, ymin = 5e-17,
			ytick={1e-1,1e-4,1e-7,1e-10,1e-13,1e-16},
			grid=major,
			cycle list name = rainbow
			]
			
			\addplot table[
					x expr = {\thisrowno{0}},
					y expr = {\thisrowno{1}}
				] {data/Results_DIMDRK_HB-I2DRK4-2s_AT_p=2_PR_Tend=5_eps=1_splitting=0_damped_newton_backslash_norm=l2.csv};
	
			\addplot table[
					x expr = {\thisrowno{0}},
					y expr = {\thisrowno{1}}
				] {data/Results_DIMDRK_HB-I3DRK6-2s_AT_p=3_PR_Tend=5_eps=1_splitting=0_damped_newton_backslash_norm=l2.csv};
	
			\addplot table[
					x expr = {\thisrowno{0}},
					y expr = {\thisrowno{1}}
				] {data/Results_DIMDRK_HB-I4DRK8-2s_AT_p=4_PR_Tend=5_eps=1_splitting=0_damped_newton_backslash_norm=l2.csv};
	
			\addplot table[
					x expr = {\thisrowno{0}},
					y expr = {\thisrowno{1}}
				] {data/Results_DIMDRK_SSP-I2DRK3-2s_AT_p=1_PR_Tend=5_eps=1_splitting=0_damped_newton_backslash_norm=l2.csv};
	
			\addplot table[
					x expr = {\thisrowno{0}},
					y expr = {\thisrowno{1}}
				] {data/Results_DIMDRK_SSP-I2DRK4-5s_AT_p=2_PR_Tend=5_eps=1_splitting=0_damped_newton_backslash_norm=l2.csv};
	
			\logLogSlopeTriangle{0.18}{0.08}{0.29}{4}{1}{1}{red};
			\logLogSlopeTriangle{0.295}{0.08}{0.23}{6}{-1}{1}{orange};
			\logLogSlopeTriangle{0.605}{0.08}{0.23}{8}{1}{1}{grassGreen};
			\logLogSlopeTriangle{0.09}{0.08}{0.58}{3}{-1}{1}{mintGreen};
	
		\end{loglogaxis}
		\end{tikzpicture}
	  }
	  {
	  \includegraphics{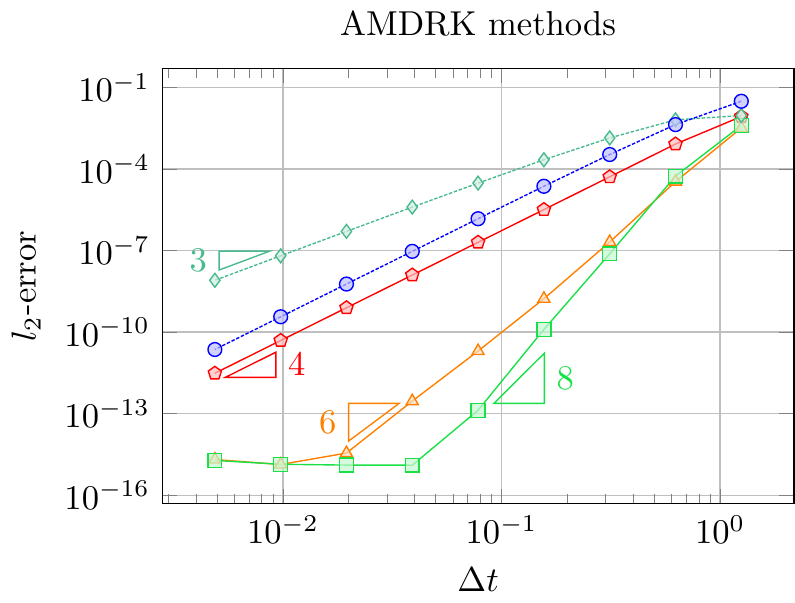}
	  }%
\end{subfigure}%
\begin{subfigure}{0.51\textwidth}
	\centering
	\ifthenelse{\boolean{compilefromscratch}}{
        \tikzsetnextfilename{Convergence_DIMDRK_PR_Tend=5_eps=1}  
		\begin{tikzpicture}
		\begin{loglogaxis}[
			title={MDRK methods},
			height=6cm,
			width=8cm,
			xlabel={$\Delta t$},
			ylabel={\phantom{$l_2$-error}},
			ylabel shift = 2em,
		    ymax = 0.5, ymin = 5e-17,
			ytick={1e-1,1e-4,1e-7,1e-10,1e-13,1e-16},
					ymajorticks=false,
			grid=major,
			legend style={nodes={scale=0.95, transform shape}},
			legend entries={HB-I2DRK4-2s, HB-I3DRK6-2s, HB-I4DRK8-2s, SSP-I2DRK3-2s, SSP-I2DRK4-5s},
			legend columns=3,
			legend to name=convergence-PR-eps1,
			cycle list name = rainbow
			]
			
			\addplot table[
					x expr = {\thisrowno{0}},
					y expr = {\thisrowno{1}}
				] {data/Results_DIMDRK_HB-I2DRK4-2s_PR_Tend=5_eps=1_splitting=0_damped_newton_backslash_norm=l2.csv};
	
			\addplot table[
					x expr = {\thisrowno{0}},
					y expr = {\thisrowno{1}}
				] {data/Results_DIMDRK_HB-I3DRK6-2s_PR_Tend=5_eps=1_splitting=0_damped_newton_backslash_norm=l2.csv};
	
			\addplot table[
					x expr = {\thisrowno{0}},
					y expr = {\thisrowno{1}}
				] {data/Results_DIMDRK_HB-I4DRK8-2s_PR_Tend=5_eps=1_splitting=0_damped_newton_backslash_norm=l2.csv};
	
			\addplot table[
					x expr = {\thisrowno{0}},
					y expr = {\thisrowno{1}}
				] {data/Results_DIMDRK_SSP-I2DRK3-2s_PR_Tend=5_eps=1_splitting=0_damped_newton_backslash_norm=l2.csv};
	
			\addplot table[
					x expr = {\thisrowno{0}},
					y expr = {\thisrowno{1}}
				] {data/Results_DIMDRK_SSP-I2DRK4-5s_PR_Tend=5_eps=1_splitting=0_damped_newton_backslash_norm=l2.csv};
	
			\logLogSlopeTriangle{0.18}{0.08}{0.29}{4}{1}{1}{red};
			\logLogSlopeTriangle{0.295}{0.08}{0.23}{6}{-1}{1}{orange};
			\logLogSlopeTriangle{0.71}{0.08}{0.25}{8}{1}{1}{grassGreen};
			\logLogSlopeTriangle{0.09}{0.08}{0.57}{3}{-1}{1}{mintGreen};
	
		\end{loglogaxis}
		\end{tikzpicture}
	  }
	  {
	  \includegraphics{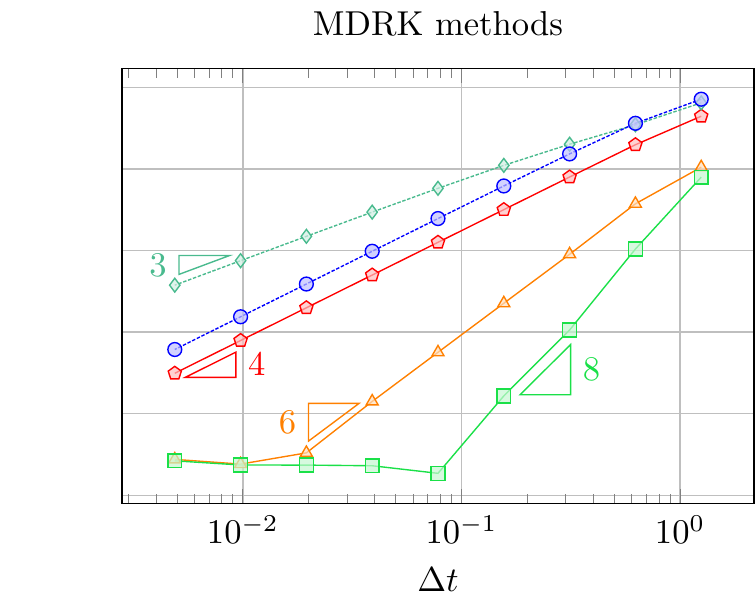}
	  }
\end{subfigure}

\hspace{1.3cm}\ifthenelse{\boolean{compilefromscratch}}{
				\ref{convergence-PR-eps1}
			  }
	  		  {
			  \includegraphics{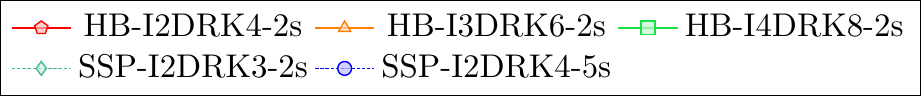}
			  }

	\caption{Pareschi-Russo problem: Convergence of the AMDRK scheme (Def. \ref{def:AMDRK-scheme}) and the MDRK scheme (Def. \ref{def:MDRK-scheme}). The final time is set to $T_{\text{end}}=5$, timesteps start from $N=4$; $\varepsilon=1$. Three Hermite-Birkhoff schemes and two SSP schemes are considered, see \ref{app:ButherTableaux}.}
\label{fig:Pareschi-Russo_eps_1_convergence_AT_direct}	
\end{figure}
We can clearly see that the AMDRK method achieves the appropriate convergence orders for all the considered schemes.
Also, when compared to their exact MDRK counterpart, differences are barely visible. Only for high values of $\Delta t$ and large orders of consistency, differences are visible.

\begin{figure}[h!]
\pgfplotsset{
	legend style={font=\footnotesize}
}
\centering
\hspace{1cm}\fbox{PR with $\varepsilon=10^{-3}$}
\begin{subfigure}{.41\textwidth}
	\ifthenelse{\boolean{compilefromscratch}}{
        \tikzsetnextfilename{Convergence_DIMDRK_AT_PR_Tend=5_eps=0.001} 
		\begin{tikzpicture}
		\begin{loglogaxis}[
			title={AMDRK methods},
			height=6cm,
			width=8cm,
			xlabel={$\Delta t$},
			ylabel={$l_2$-error},
		    ymax = 1, ymin = 5e-17,
			ytick={1e-1,1e-4,1e-7,1e-10,1e-13,1e-16},
			grid=major,
			cycle list name = rainbow
			]
			
			\addplot table[
					x expr = {\thisrowno{0}},
					y expr = {\thisrowno{1}}
				] {data/Results_DIMDRK_HB-I2DRK4-2s_AT_p=2_PR_Tend=5_eps=0.001_splitting=0_damped_newton_backslash_norm=l2.csv};
	
			\addplot table[
					x expr = {\thisrowno{0}},
					y expr = {\thisrowno{1}}
				] {data/Results_DIMDRK_HB-I3DRK6-2s_AT_p=3_PR_Tend=5_eps=0.001_splitting=0_damped_newton_backslash_norm=l2.csv};
	
			\addplot table[
					x expr = {\thisrowno{0}},
					y expr = {\thisrowno{1}}
				] {data/Results_DIMDRK_HB-I4DRK8-2s_AT_p=4_PR_Tend=5_eps=0.001_splitting=0_damped_newton_backslash_norm=l2.csv};
	
			\addplot table[
					x expr = {\thisrowno{0}},
					y expr = {\thisrowno{1}}
				] {data/Results_DIMDRK_SSP-I2DRK3-2s_AT_p=1_PR_Tend=5_eps=0.001_splitting=0_damped_newton_backslash_norm=l2.csv};
	
			\addplot table[
					x expr = {\thisrowno{0}},
					y expr = {\thisrowno{1}}
				] {data/Results_DIMDRK_SSP-I2DRK4-5s_AT_p=2_PR_Tend=5_eps=0.001_splitting=0_damped_newton_backslash_norm=l2.csv};
	
			\logLogSlopeTriangle{0.09}{0.08}{0.325}{4}{-1}{1}{blue};
			\logLogSlopeTriangle{0.185}{0.08}{0.09}{8}{1}{1}{grassGreen};
			\logLogSlopeTriangle{0.09}{0.08}{0.47}{3}{-1}{1}{mintGreen};
	
		\end{loglogaxis}
		\end{tikzpicture}
	  }
	  {
	  \includegraphics{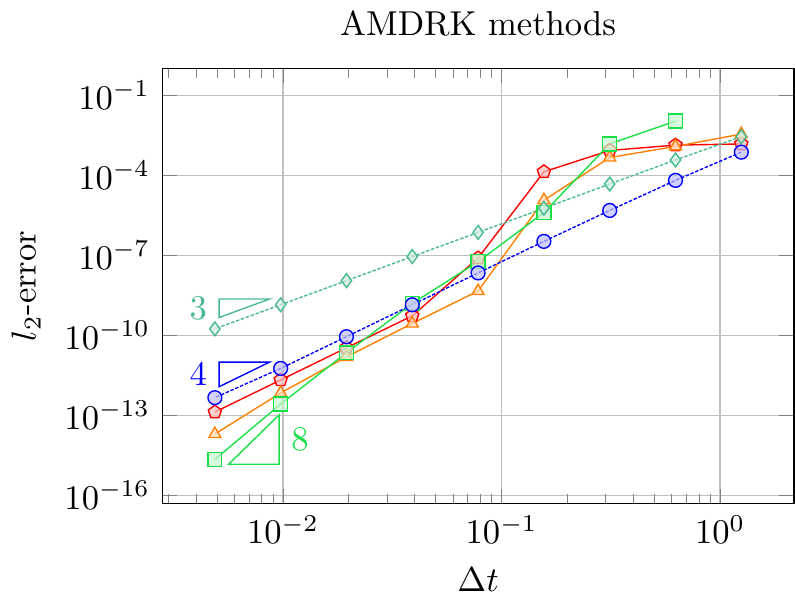}
	  }%
\end{subfigure}%
\begin{subfigure}{0.51\textwidth}
	\centering
	\ifthenelse{\boolean{compilefromscratch}}{
        \tikzsetnextfilename{Convergence_DIMDRK_PR_Tend=5_eps=0.001} 
		\begin{tikzpicture}
		\begin{loglogaxis}[
			title={MDRK methods},
			height=6cm,
			width=8cm,
			xlabel={$\Delta t$},
			ylabel={\phantom{$l_2$-error}},
			ylabel shift = 2em,
		    ymax = 1, ymin = 5e-17,
			ytick={1e-1,1e-4,1e-7,1e-10,1e-13,1e-16},
					ymajorticks=false,
			grid=major,
			legend style={nodes={scale=0.95, transform shape}},
			legend entries={HB-I2DRK4-2s, HB-I3DRK6-2s, HB-I4DRK8-2s, SSP-I2DRK3-2s, SSP-I2DRK4-5s},
			legend columns=3,
			legend to name=convergence-PR-eps1e-3,
			cycle list name = rainbow
			]
			
			\addplot table[
					x expr = {\thisrowno{0}},
					y expr = {\thisrowno{1}}
				] {data/Results_DIMDRK_HB-I2DRK4-2s_PR_Tend=5_eps=0.001_splitting=0_damped_newton_backslash_norm=l2.csv};
	
			\addplot table[
					x expr = {\thisrowno{0}},
					y expr = {\thisrowno{1}}
				] {data/Results_DIMDRK_HB-I3DRK6-2s_PR_Tend=5_eps=0.001_splitting=0_damped_newton_backslash_norm=l2.csv};
	
			\addplot table[
					x expr = {\thisrowno{0}},
					y expr = {\thisrowno{1}}
				] {data/Results_DIMDRK_HB-I4DRK8-2s_PR_Tend=5_eps=0.001_splitting=0_damped_newton_backslash_norm=l2.csv};
	
			\addplot table[
					x expr = {\thisrowno{0}},
					y expr = {\thisrowno{1}}
				] {data/Results_DIMDRK_SSP-I2DRK3-2s_PR_Tend=5_eps=0.001_splitting=0_damped_newton_backslash_norm=l2.csv};
	
			\addplot table[
					x expr = {\thisrowno{0}},
					y expr = {\thisrowno{1}}
				] {data/Results_DIMDRK_SSP-I2DRK4-5s_PR_Tend=5_eps=0.001_splitting=0_damped_newton_backslash_norm=l2.csv};
				
			\logLogSlopeTriangle{0.09}{0.08}{0.325}{4}{-1}{1}{blue};
			\logLogSlopeTriangle{0.185}{0.08}{0.09}{8}{1}{1}{grassGreen};
			\logLogSlopeTriangle{0.09}{0.08}{0.45}{3}{-1}{1}{mintGreen};
	
		\end{loglogaxis}
		\end{tikzpicture}
	  }
	  {
	  \includegraphics{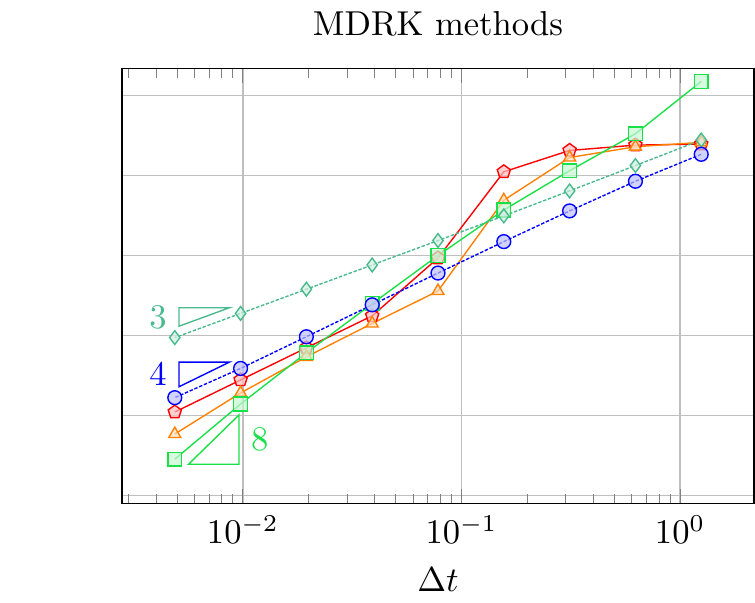}
	  }%
\end{subfigure}

\hspace{1.3cm}\ifthenelse{\boolean{compilefromscratch}}{
				\ref{convergence-PR-eps1e-3}
			  }
	  		  {
			  \includegraphics{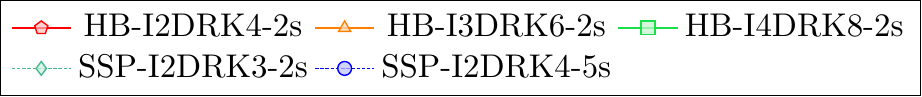}
			  }
	
	\caption{Pareschi-Russo problem: Convergence of the AMDRK scheme (Def. \ref{def:AMDRK-scheme}) and the MDRK scheme (Def. \ref{def:MDRK-scheme}). The final time $T_{\text{end}}=5$, timesteps start from $N=4$; $\varepsilon=10^{-3}$. Three Hermite-Birkhoff schemes and two SSP schemes are considered, see \ref{app:ButherTableaux}. For $N=4$ the HB-I4DRK8-2s scheme diverges, hence no node is shown.}
\label{fig:Pareschi-Russo_eps_1e-3_convergence_AT_direct}	
\end{figure}

When stiffness is increased by decreasing $\varepsilon$ to $\varepsilon = 10^{-3}$, see Figure \ref{fig:Pareschi-Russo_eps_1e-3_convergence_AT_direct}, the same methods as well show convergence for $\Delta t \rightarrow 0$. However, due to order reduction phenomena, it is more difficult to observe the appropriate order here. Above all, for values $\varepsilon \ll {\Delta t}$, the scheme \mbox{HB-I4DRK8-2s} (Table \ref{tab:HB-I4DRK8-2s}) has not properly converged in the Newton iterations; for $N=4$ the AMDRK method diverges immediately, hence there being no node in the left plot of Figure \ref{fig:Pareschi-Russo_eps_1e-3_convergence_AT_direct}, whereas the exact MDRK method shows a large error. 



\subsection{Numerical observations of the Newton conditioning}
So, even though all three approaches \eqref{eq:FaaDiBruno-timeDer}, \eqref{eq:timeDerRecursive} and \eqref{eq:timeDerApprox}-\eqref{eq:approxPhiValues} for calculating the derivatives $y^{(k)}$ yield valid high-order algorithms, numerically we observe stability issues for stiff problems $\varepsilon \ll {\Delta t}$. More specifically, when $\varepsilon \ll {\Delta t}$, the Newton Jacobian $F'(Y)$ is badly conditioned. In Table \ref{tab:newton_cond_DIMDRK_ImplTaylor_order=3_AT_p=1_PR_Tend=1} we display the arithmetic mean of the condition numbers in the 1-norm w.r.t. the Newton iterations,
\[
\mu\!\left(\text{cond}(F')\right) := \frac{\sum\limits_{i=1}^{N_{\text{iter}}}{{\text{cond}(F'(Y^{[i]}))}}}{N_{\text{iter}}} \, ,
\]
which we have obtained from solving Eq.~\eqref{eq:PR} with the approximate implicit Taylor method of order $\mathtt{r}=3$ for different values of $\varepsilon$. To account for large timesteps, only a single step $N=1$ of size $\Delta t = 1$ was applied. Newton tolerances, Eq.~\eqref{eq:criteria_newton}, were set to $10^{-12}$ under a maximum of 10000 iterations.

In order to put the obtained condition numbers into perspective, the empirical orders w.r.t. $\varepsilon$
\begin{equation}\label{eq:EO}
	\text{EO}_{\varepsilon} := \frac{\log\!\left(\frac{\mu\!\left(\text{cond}(F'_{\varepsilon})\right)}{\mu\!\left(\text{cond}(F'_{10\varepsilon})\right)}\right)}{\log(10)}
\end{equation}
are additionally computed (where $F'_{\varepsilon}$ denotes $F'$ for a particular value of $\varepsilon$). In this case, the experimental order seems to equal the order ($\mathtt{r}=3$ here) of the implicit Taylor method. And in fact, the same behavior is observed for any amount of derivatives $\mathtt{r}$ used. That means, we numerically observe the asymptotic behavior
\begin{equation}
\text{cond}(F') = \mathcal{O}(\varepsilon^{-\mathtt{r}})
\end{equation}
to hold true for any order of the implicit Taylor scheme. As a result of bad conditioning, in Table \ref{tab:newton_cond_DIMDRK_ImplTaylor_order=3_AT_p=1_PR_Tend=1} we can therefore observe that the (A)MDRK methods do not converge for Pareschi-Russo's equation \eqref{eq:PR} when $\varepsilon = 10^{-5}$.
In general, when considering any DIMDRK scheme, the same derivative-dependent behavior holds true for any implicit stage. In Figure \ref{fig:CondNr-DIMDRK-direct-Pareschi-Russo} we plot the condition number for the final stage of different DIMDRK schemes.

The FSMDRK implementation shows a behavior similar to the DIMDRK implementation, see Figure~\ref{fig:CondNr-FSMDRK_direct-Pareschi-Russo}. An intuitive reasoning can be found in the block-matrix structure of the Newton Jacobian. Due to the stages all being solved for at once, the Jacobian reads as
\begin{align}\label{eq:jacobianFSMDRK-Direct}
F'(Y) = \frac{\partial}{\partial Y}\left[\def\arraystretch{1.3}
\begin{array}{c}
F_{1} \\
\vdots \\
F_{{\mathtt{s}}}
\end{array}\right]
 = \left[\def\arraystretch{1.3}
\begin{array}{c|c|c}
\partial_{Y_1}F_{1} & \hdots & \partial_{Y_{\mathtt{s}}}F_{1} \\ \hline 
\vdots & \ddots & \vdots \\ \hline
\partial_{Y_1}F_{{\mathtt{s}}} & \hdots & \partial_{Y_{\mathtt{s}}}F_{{\mathtt{s}}} 
\end{array}\right] \, ,
\end{align}
with $\partial_{Y_{{\color{VioletRed}\nu}}}F_{{\color{BlueGreen}l}}$ the partial derivative of the ${\color{BlueGreen}l}$-th stage equation w.r.t. the ${\color{VioletRed}\nu}$-th stage variable $Y_{{\color{VioletRed}\nu}}$. This block-structure assures a dependency of $\cond(F')$ on the conditioning $\cond(\partial_{Y_{{\color{VioletRed}\nu}}}F_{{\color{BlueGreen}l}})$ of the separate blocks. Hence, if $\cond(\partial_{Y_{{\color{VioletRed}\nu}}}F_{{\color{BlueGreen}l}}) =  \mathcal{O}(\varepsilon^{-\mathtt{r}})$, as is often the case from what is observed in the DIMDRK implementation for an $\mathtt{r}$-derivative scheme, the complete Jacobian likely also behaves at least as $\mathcal{O}(\varepsilon^{-\mathtt{r}})$.


\begin{remark} 
If the first stage is explicit, s.t. ${y}^{n,1} = y^n$, we make the assumption that the FSMDRK approach instead solves for $Y = \left({y}^{n,2}, \dots, {y}^{n,\mathtt{s}}\right)$. The Hermite-Birkhoff schemes (Tables \ref{tab:HB-I2DRK4-2s}-\ref{tab:HB-I4DRK8-2s}) are good examples of RK-schemes with an explicit first stage.
\end{remark}


Moreover, when we consider other problems with a similar dependency on a small non-dimensional value $\varepsilon$ as for the PR problem \eqref{eq:PR}, then as well $\mathcal{O}(\varepsilon^{-\mathtt{r}})$ behavior is observed. Similar condition number plots alike the ones in Figs. \ref{fig:CondNr-DIMDRK-direct-Pareschi-Russo} and \ref{fig:CondNr-FSMDRK_direct-Pareschi-Russo} have been obtained for van der Pol and Kaps~problems described in \cite{SealSchuetz19}.

\begin{remark}
Albeit mathematically equivalent, in Table \ref{tab:newton_cond_DIMDRK_ImplTaylor_order=3_AT_p=1_PR_Tend=1} we can numerically see different results between using exact Jacobians (in the sense that we apply Fa\'a die Bruno's formula) and using recursive formulas for calculating the derivatives $y^{(k)}$.
\end{remark}

\begin{table}[h!]
\centering
	\caption{Newton statistics of the implicit Taylor method of order 3 applied for a single timestep ($N=1$) of size $T_{\text{end}} = 1$ ($\Delta t = 1$) to the PR problem \eqref{eq:PR}. Tolerances, Eq. \eqref{eq:criteria_newton}, were set to $10^{-12}$ under a maximum of 10000 iterations. \underline{Left}: The amount of iterations $N_{\text{iter}}$ and the average condition number in the 1-norm of the Newton Jacobian $\mu\!\left(\text{cond}(F')\right)$. $\text{EO}_{\varepsilon}$ is the experimental order of the average w.r.t. $\varepsilon$ according to Eq. \eqref{eq:EO}. For $\varepsilon = 10^{-5}$, none of the methods converged, with A-Direct diverging at 702 iterations. \underline{Right}: The first 330 iterations of A-Direct. We can observe that for $\varepsilon = 10^{-5}$ the scheme becomes unstable and diverges eventually at iteration 702.}
\label{tab:newton_cond_DIMDRK_ImplTaylor_order=3_AT_p=1_PR_Tend=1}
\begin{minipage}{0.45\textwidth}
\begin{tabular}{c|c|c|l|l}
 Method & $\varepsilon$ & $N_{\text{iter}}$ & $\mu\!\left(\text{cond}(F')\right)$ & $\text{EO}_{\varepsilon}$ \\ \hline
 \multirow{7}{*}{A-Direct} & $1$ & $5$ & ~$4.45 \cdot 10^0$ & \\
		             & $10^{-1}$ & $6$ & ~$2.89 \cdot 10^2$ & $1.81$ \\
		             & $10^{-2}$ & $34$ & ~$2.69 \cdot 10^5$ & $2.97$ \\
		             & $10^{-3}$ & $75$ & ~$2.71 \cdot 10^8$ & $3.00$ \\
		             & $10^{-4}$ & $226$ & ~$2.51 \cdot 10^{11}$ & $2.97$ \\
		         	 & {\color{red}$10^{-5}$} & {\color{red}$702$} & ~$8.20 \cdot 10^{14}$ & $3.51$ \\ \hline
    \multirow{7}{*}{EJ-Direct} & $1$ & $5$ & ~$3.44 \cdot 10^0$ & \\
    		         & $10^{-1}$ & $7$ & ~$2.78 \cdot 10^2$ & $1.91$ \\
		             & $10^{-2}$ & $124$ & ~$3.00 \cdot 10^5$ & $3.03$ \\
		             & $10^{-3}$ & $45$ & ~$5.57 \cdot 10^8$ & $3.27$ \\
		             & {\color{red}$10^{-4}$} & {\color{red}$10000$} & ~$6.58 \cdot 10^{10}$ & $2.07$ \\
		         	 & {\color{red}$10^{-5}$} & {\color{red}$10000$} & ~$2.70 \cdot 10^{13}$ & $2.61$ \\ \hline
    \multirow{7}{*}{rec-Direct} & $1$ & $5$ & ~$3.44 \cdot 10^0$ & \\
   		             & $10^{-1}$ & $7$ & ~$2.78 \cdot 10^2$ & $1.91$ \\
		             & $10^{-2}$ & $124$ & ~$3.00 \cdot 10^5$ & $3.03$ \\
		             & $10^{-3}$ & $45$ & ~$5.57 \cdot 10^8$ & $3.27$ \\
		             & {\color{red}$10^{-4}$} & {\color{red}$10000$} & ~$6.32 \cdot 10^{10}$ & $2.05$ \\
		         	 & {\color{red}$10^{-5}$} & {\color{red}$10000$} & ~$3.31 \cdot 10^{13}$ & $2.72$
\end{tabular}%
\end{minipage}%
\hspace{1em}
\begin{minipage}{.45\textwidth}
\pgfplotsset{
	legend style={font=\normalsize}
}
\centering
	\ifthenelse{\boolean{compilefromscratch}}{
        \tikzsetnextfilename{newton_cond_DIMDRK_ImplTaylor_order=3_AT_p=1_PR_Tend=1} 
		\begin{tikzpicture}
		\begin{semilogyaxis}[
			title={A-Direct: $\text{cond}(F')$},
			width=7cm,
			height=8.1cm,
			xlabel={iteration},
			xmax = 330,
			ymajorgrids,
			legend style={nodes={scale=0.85, transform shape}},
			legend entries={$\varepsilon=10^{-2}$,$\varepsilon=10^{-3}$,$\varepsilon=10^{-4}$,$\varepsilon=10^{-5}$},
			legend cell align=left,
			legend pos=south east,
			cycle list name=color list
			]
			
	%
	
			\addplot+[line width=1.1pt] table[
					x expr = {\coordindex+1},
					y expr = {\thisrow{sym_cond1}}
				] {data/newton_cond_DIMDRK_ImplTaylor_order=3_AT_p=1_PR_Tend=1_eps=0.01_splitting=0_damped_newton_backslash_Nt=1_timestep1_stage1.csv};
		
			\addplot+[line width=0.8pt, mintGreen] table[
					x expr = {\coordindex+1},
					y expr = {\thisrow{sym_cond1}}
				] {data/newton_cond_DIMDRK_ImplTaylor_order=3_AT_p=1_PR_Tend=1_eps=0.001_splitting=0_damped_newton_backslash_Nt=1_timestep1_stage1.csv};
				
			\addplot+[line width=0.5pt] table[
					x expr = {\coordindex+1},
					y expr = {\thisrow{sym_cond1}}
				] {data/newton_cond_DIMDRK_ImplTaylor_order=3_AT_p=1_PR_Tend=1_eps=0.0001_splitting=0_damped_newton_backslash_Nt=1_timestep1_stage1.csv};
				
			\pgfplotsset{cycle list shift=1}	
			\addplot+[line width=0.2pt] table[
					x expr = {\coordindex+1},
					y expr = {\thisrow{sym_cond1}}
				] {data/newton_cond_DIMDRK_ImplTaylor_order=3_AT_p=1_PR_Tend=1_eps=1e-05_splitting=0_damped_newton_backslash_Nt=1_timestep1_stage1.csv};						
		\end{semilogyaxis}
		\end{tikzpicture}
	  }
	  {
	  \includegraphics{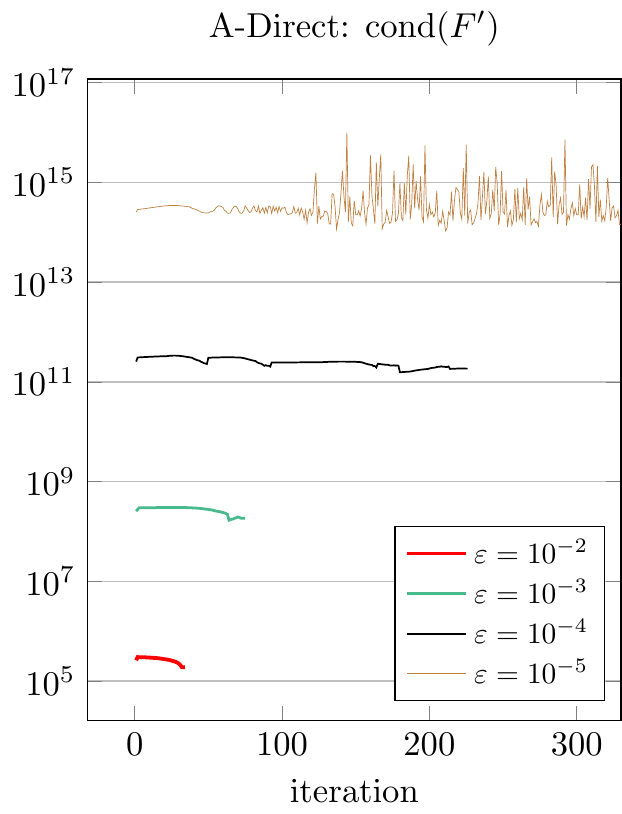}
	  }%
	\end{minipage}
\end{table}

\begin{figure}[t]
\pgfplotsset{
	title style={font=\footnotesize},
	tick label style={font=\footnotesize},
	label style={font=\footnotesize},
	legend style={font=\footnotesize}
}
\centering
\begin{subfigure}{.325\textwidth}
  \centering
  \ifthenelse{\boolean{compilefromscratch}}{
        \tikzsetnextfilename{DataCondNr_DIMDRK-Direct_AT_PR_Tend=1.25} 
		\begin{tikzpicture}
		\begin{loglogaxis}[
			title={A-Direct},
			height=5.5cm,
			xlabel={$\varepsilon$},
			ylabel={$\mu\!\left(\text{cond}(F')\right)$},
			xtick={1e0,1e-1,1e-2,1e-3,1e-4},
	        ymax = 1e17, ymin = 1e-1,
			ytick={1e1,1e4,1e7,1e10,1e13,1e16},
			grid=major,
			legend style={nodes={scale=0.95, transform shape}},
			cycle list name = rainbow2
			]
			\addplot table[
					x expr = {\thisrow{epsilon}},
					y expr = {\thisrow{mean_symcond1}}
				]{data/DataCondNr_DIMDRK_HB-I2DRK4-2s_AT_p=2_PR_Tend=1.25_splitting=0_damped_newton_backslash_Nt=1_timestep1_stage2.csv};
		    \addplot table[
					x expr = {\thisrow{epsilon}},
					y expr = {\thisrow{mean_symcond1}}
				]{data/DataCondNr_DIMDRK_HB-I3DRK6-2s_AT_p=3_PR_Tend=1.25_splitting=0_damped_newton_backslash_Nt=1_timestep1_stage2.csv};
		    \addplot table[
					x expr = {\thisrow{epsilon}},
					y expr = {\thisrow{mean_symcond1}}
				]{data/DataCondNr_DIMDRK_HB-I4DRK8-2s_AT_p=4_PR_Tend=1.25_splitting=0_damped_newton_backslash_Nt=1_timestep1_stage2.csv};
		    \addplot table[
					x expr = {\thisrow{epsilon}},
					y expr = {\thisrow{mean_symcond1}}
				]{data/DataCondNr_DIMDRK_SSP-I2DRK3-2s_AT_p=1_PR_Tend=1.25_splitting=0_damped_newton_backslash_Nt=1_timestep1_stage2.csv};
			\addplot table[
					x expr = {\thisrow{epsilon}},
					y expr = {\thisrow{mean_symcond1}}
				]{data/DataCondNr_DIMDRK_SSP-I2DRK4-5s_AT_p=2_PR_Tend=1.25_splitting=0_damped_newton_backslash_Nt=1_timestep1_stage5.csv};
			\addplot table[
					x expr = {\thisrow{epsilon}},
					y expr = {\thisrow{mean_symcond1}}
				]{data/DataCondNr_DIMDRK_ImplTaylor_order=3_AT_p=1_PR_Tend=1.25_splitting=0_damped_newton_backslash_Nt=1_timestep1_stage1.csv};
		    \addplot table[
					x expr = {\thisrow{epsilon}},
					y expr = {\thisrow{mean_symcond1}}
				]{data/DataCondNr_DIMDRK_ImplTaylor_order=4_AT_p=2_PR_Tend=1.25_splitting=0_damped_newton_backslash_Nt=1_timestep1_stage1.csv};
		
			\logLogSlopeTriangle{0.21}{0.1}{0.35}{2}{1}{-1}{red};
			\logLogSlopeTriangle{0.21}{0.1}{0.495}{3}{1}{-1}{orange};
			\logLogSlopeTriangle{0.32}{0.1}{0.605}{4}{-1}{-1}{grassGreen};	
		
		\end{loglogaxis}
		\end{tikzpicture}
	  }
	  {
	  \includegraphics{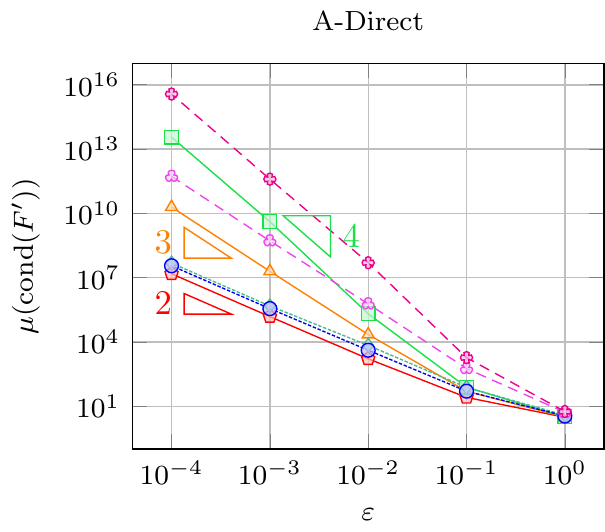}
	  }%
\end{subfigure}%
\begin{subfigure}{0.33\textwidth}
  \centering
  \ifthenelse{\boolean{compilefromscratch}}{
        \tikzsetnextfilename{DataCondNr_DIMDRK-Direct_EJ_PR_Tend=1.25} 
		\begin{tikzpicture}
		\begin{loglogaxis}[
			title={EJ-Direct},
			height=5.5cm,
			xlabel={$\varepsilon$},
			ylabel={\phantom{$\text{cond}(F')$}},
			ylabel shift = 1.3em,
		    xtick={1e0,1e-1,1e-2,1e-3,1e-4},
	        ymax = 1e17, ymin = 1e-1,
			ytick={1e1,1e4,1e7,1e10,1e13,1e16},
			ymajorticks=false,
			grid=major,
			legend style={nodes={scale=0.95, transform shape}},
			transpose legend,
			legend columns=3,
			legend to name=DIMDRKschemes-PR,
			cycle list name = rainbow2
			]
			\addplot table[
					x expr = {\thisrow{epsilon}},
					y expr = {\thisrow{mean_symcond1}}
				]{data/DataCondNr_DIMDRK_HB-I2DRK4-2s_EJ_PR_Tend=1.25_splitting=0_damped_newton_backslash_Nt=1_timestep1_stage2.csv};
		    \addlegendentry{HB-I2DRK4-2s};
		    
		    \addplot table[
					x expr = {\thisrow{epsilon}},
					y expr = {\thisrow{mean_symcond1}}
				]{data/DataCondNr_DIMDRK_HB-I3DRK6-2s_EJ_PR_Tend=1.25_splitting=0_damped_newton_backslash_Nt=1_timestep1_stage2.csv};
			\addlegendentry{HB-I3DRK6-2s};
			
			\addplot table[
					x expr = {\thisrow{epsilon}},
					y expr = {\thisrow{mean_symcond1}}
				]{data/DataCondNr_DIMDRK_HB-I4DRK8-2s_EJ_PR_Tend=1.25_splitting=0_damped_newton_backslash_Nt=1_timestep1_stage2.csv};
			\addlegendentry{HB-I4DRK8-2s};
			
			\addplot table[
					x expr = {\thisrow{epsilon}},
					y expr = {\thisrow{mean_symcond1}}
				]{data/DataCondNr_DIMDRK_SSP-I2DRK3-2s_EJ_PR_Tend=1.25_splitting=0_damped_newton_backslash_Nt=1_timestep1_stage2.csv};
		    \addlegendentry{SSP-I2DRK3-2s};
		   
		    \addplot table[
					x expr = {\thisrow{epsilon}},
					y expr = {\thisrow{mean_symcond1}}
				]{data/DataCondNr_DIMDRK_SSP-I2DRK4-5s_EJ_PR_Tend=1.25_splitting=0_damped_newton_backslash_Nt=1_timestep1_stage5.csv};
			\addlegendentry{SSP-I2DRK4-5s};
			
	        \addlegendimage{empty legend}
	        \addlegendentry{}		
			
			\addplot table[
					x expr = {\thisrow{epsilon}},
					y expr = {\thisrow{mean_symcond1}}
				]{data/DataCondNr_DIMDRK_ImplTaylor_order=3_EJ_PR_Tend=1.25_splitting=0_damped_newton_backslash_Nt=1_timestep1_stage1.csv};
			\addlegendentry{ImplTaylor-3};
			
			\addplot table[
					x expr = {\thisrow{epsilon}},
					y expr = {\thisrow{mean_symcond1}}
				]{data/DataCondNr_DIMDRK_ImplTaylor_order=4_EJ_PR_Tend=1.25_splitting=0_damped_newton_backslash_Nt=1_timestep1_stage1.csv};
			\addlegendentry{ImplTaylor-4};
		
		\end{loglogaxis}
		\end{tikzpicture}
	  }
	  {
	  \includegraphics{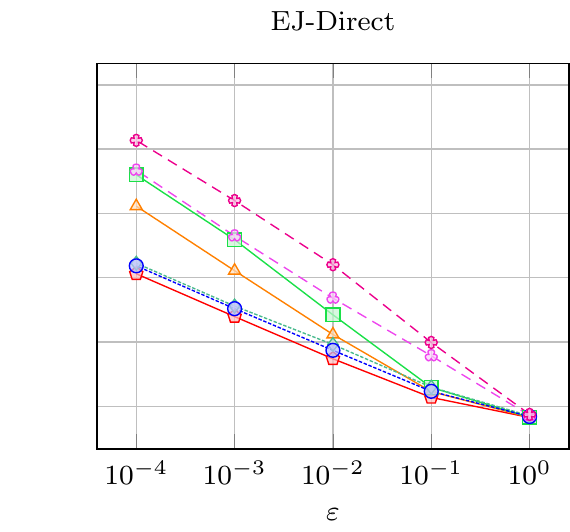}
	  }%
\end{subfigure}%
\begin{subfigure}{0.325\textwidth}
  \centering
  \ifthenelse{\boolean{compilefromscratch}}{
        \tikzsetnextfilename{DataCondNr_DIMDRK-Direct_rec_PR_Tend=1.25} 
		\begin{tikzpicture}
		\begin{loglogaxis}[
			title={rec-Direct},
			height=5.5cm,
			xlabel={$\varepsilon$},
			ylabel={\phantom{$\text{cond}(F')$}},
			xtick={1e0,1e-1,1e-2,1e-3,1e-4},
	        ymax = 1e17, ymin = 1e-1,
			ytick={1e1,1e4,1e7,1e10,1e13,1e16},
			ymajorticks=false,
			grid=major,
			legend style={nodes={scale=0.95, transform shape}},
			cycle list name = rainbow2
			]
			\addplot table[
					x expr = {\thisrow{epsilon}},
					y expr = {\thisrow{mean_symcond1}}
				]{data/DataCondNr_DIMDRK_HB-I2DRK4-2s_PR_Tend=1.25_splitting=0_damped_newton_backslash_Nt=1_timestep1_stage2.csv};
		    \addplot table[
					x expr = {\thisrow{epsilon}},
					y expr = {\thisrow{mean_symcond1}}
				]{data/DataCondNr_DIMDRK_HB-I3DRK6-2s_PR_Tend=1.25_splitting=0_damped_newton_backslash_Nt=1_timestep1_stage2.csv};
			\addplot table[
					x expr = {\thisrow{epsilon}},
					y expr = {\thisrow{mean_symcond1}}
				]{data/DataCondNr_DIMDRK_HB-I4DRK8-2s_PR_Tend=1.25_splitting=0_damped_newton_backslash_Nt=1_timestep1_stage2.csv};
			\addplot table[
					x expr = {\thisrow{epsilon}},
					y expr = {\thisrow{mean_symcond1}}
				]{data/DataCondNr_DIMDRK_SSP-I2DRK3-2s_PR_Tend=1.25_splitting=0_damped_newton_backslash_Nt=1_timestep1_stage2.csv};
		    \addplot table[
					x expr = {\thisrow{epsilon}},
					y expr = {\thisrow{mean_symcond1}}
				]{data/DataCondNr_DIMDRK_SSP-I2DRK4-5s_PR_Tend=1.25_splitting=0_damped_newton_backslash_Nt=1_timestep1_stage5.csv};
			\addplot table[
					x expr = {\thisrow{epsilon}},
					y expr = {\thisrow{mean_symcond1}}
				]{data/DataCondNr_DIMDRK_ImplTaylor_order=3_PR_Tend=1.25_splitting=0_damped_newton_backslash_Nt=1_timestep1_stage1.csv};
			\addplot table[
					x expr = {\thisrow{epsilon}},
					y expr = {\thisrow{mean_symcond1}}
				]{data/DataCondNr_DIMDRK_ImplTaylor_order=4_PR_Tend=1.25_splitting=0_damped_newton_backslash_Nt=1_timestep1_stage1.csv};
		\end{loglogaxis}
		\end{tikzpicture}
	  }
	  {
	  \includegraphics{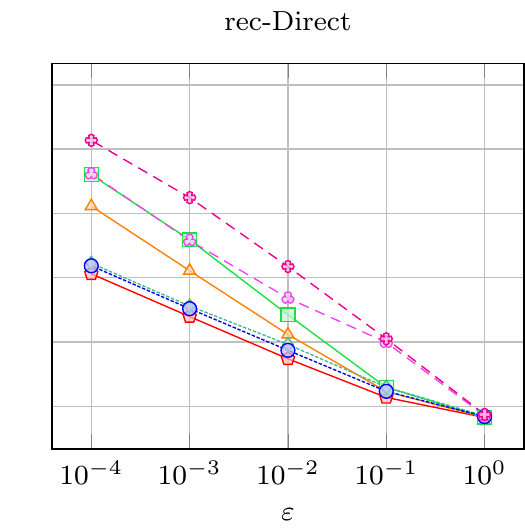}
	  }%
\end{subfigure}
\vspace{-2mm}
\hspace{1.2cm}\ifthenelse{\boolean{compilefromscratch}}{
				\ref{DIMDRKschemes-PR}
			  }
	  		  {
			  \includegraphics{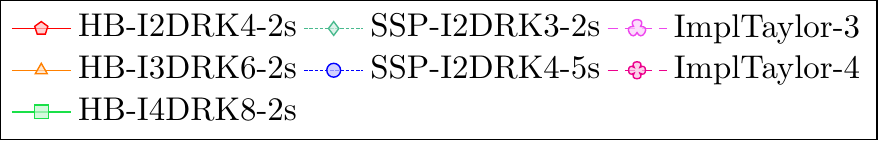}
			  }

\caption{DIMDRK schemes applied as a Direct method (Figure~\ref{fig:AMDRK-scheme}) to the PR problem~\eqref{eq:PR}. The average condition number in the 1-norm of the Newton Jacobian obtained from the last RK-stage is shown for different values of $\varepsilon$.  The behavior $\text{cond}(F') = \mathcal{O}(\varepsilon^{-\mathtt{r}})$ is observed, $\mathtt{r}$ being the amount of derivatives. A single timestep ($N=1$) of size $T_{\text{end}} = 1.25$ ($\Delta t = 1.25$) has been considered with tolerances, Eq.~\eqref{eq:criteria_newton}, set to $10^{-12}$ under a maximum of 1000 iterations.}
\label{fig:CondNr-DIMDRK-direct-Pareschi-Russo}
\end{figure}

\begin{figure}[h!]
\pgfplotsset{
	title style={font=\footnotesize},
	tick label style={font=\footnotesize},
	label style={font=\footnotesize},
	legend style={font=\footnotesize}
}
\centering
\begin{subfigure}{.325\textwidth}
  \centering
  \ifthenelse{\boolean{compilefromscratch}}{
        \tikzsetnextfilename{DataCondNr_FSMDRK-Direct_AT_PR_Tend=1.25} 
		\begin{tikzpicture}
		\begin{loglogaxis}[
			title={A-Direct},
			height=5.5cm,
			xlabel={$\varepsilon$},
			ylabel={$\mu\!\left(\text{cond}(F')\right)$},
			xtick={1e0,1e-1,1e-2,1e-3,1e-4},
	        ymax = 1e17, ymin = 1e-1,
			ytick={1e1,1e4,1e7,1e10,1e13,1e16},
			grid=major,
			legend style={nodes={scale=0.95, transform shape}},
			cycle list name = rainbow
			]
			\addplot table[
					x expr = {\thisrow{epsilon}},
					y expr = {\thisrow{mean_symcond1}}
				]{data/DataCondNr_FSMDRK_HB-I2DRK4-2s_AT_p=2_PR_Tend=1.25_splitting=0_damped_newton_backslash_Nt=1_timestep1.csv};
		    \addplot table[
					x expr = {\thisrow{epsilon}},
					y expr = {\thisrow{mean_symcond1}}
				]{data/DataCondNr_FSMDRK_HB-I3DRK6-2s_AT_p=3_PR_Tend=1.25_splitting=0_damped_newton_backslash_Nt=1_timestep1.csv};
		    \addplot table[
					x expr = {\thisrow{epsilon}},
					y expr = {\thisrow{mean_symcond1}}
				]{data/DataCondNr_FSMDRK_HB-I4DRK8-2s_AT_p=4_PR_Tend=1.25_splitting=0_damped_newton_backslash_Nt=1_timestep1.csv};
		    \addplot table[
					x expr = {\thisrow{epsilon}},
					y expr = {\thisrow{mean_symcond1}}
				]{data/DataCondNr_FSMDRK_SSP-I2DRK3-2s_AT_p=1_PR_Tend=1.25_splitting=0_damped_newton_backslash_Nt=1_timestep1.csv};
			\addplot table[
					x expr = {\thisrow{epsilon}},
					y expr = {\thisrow{mean_symcond1}}
				]{data/DataCondNr_FSMDRK_SSP-I2DRK4-5s_AT_p=2_PR_Tend=1.25_splitting=0_damped_newton_backslash_Nt=1_timestep1.csv};
			\addplot table[
					x expr = {\thisrow{epsilon}},
					y expr = {\thisrow{mean_symcond1}}
				]{data/DataCondNr_FSMDRK_HB-I2DRK6-3s_AT_p=3_PR_Tend=1.25_splitting=0_damped_newton_backslash_Nt=1_timestep1.csv};
		    \addplot table[
					x expr = {\thisrow{epsilon}},
					y expr = {\thisrow{mean_symcond1}}
				]{data/DataCondNr_FSMDRK_HB-I3DRK9-3s_AT_p=4_PR_Tend=1.25_splitting=0_damped_newton_backslash_Nt=1_timestep1.csv};
		\end{loglogaxis}
		\end{tikzpicture}
	  }
	  {
	  \includegraphics{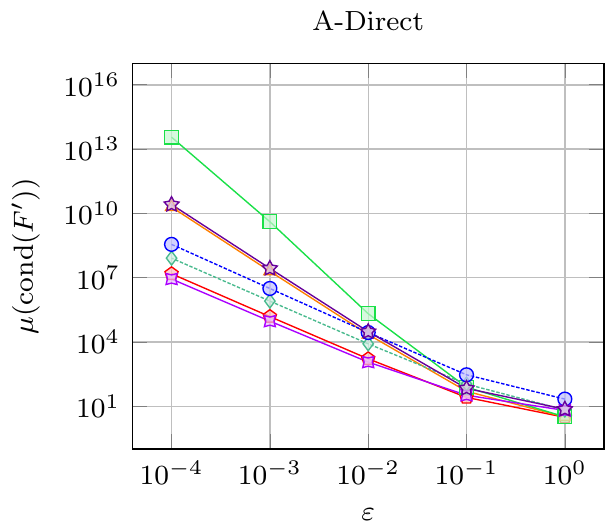}
	  }%
\end{subfigure}%
\begin{subfigure}{0.33\textwidth}
  \centering
  \ifthenelse{\boolean{compilefromscratch}}{
        \tikzsetnextfilename{DataCondNr_FSMDRK-Direct_EJ_PR_Tend=1.25} 
		\begin{tikzpicture}
		\begin{loglogaxis}[
			title={EJ-Direct},
			height=5.5cm,
			xlabel={$\varepsilon$},
			ylabel={\phantom{$\text{cond}(F')$}},
			ylabel shift = 1.3em,
		    xtick={1e0,1e-1,1e-2,1e-3,1e-4},
	        ymax = 1e17, ymin = 1e-1,
			ytick={1e1,1e4,1e7,1e10,1e13,1e16},
			ymajorticks=false,
			grid=major,
			legend style={nodes={scale=0.95, transform shape}},
			transpose legend,
			legend columns=3,
			legend to name=FSMDRK_schemes-direct-PR,
			cycle list name = rainbow
			]
			\addplot table[
					x expr = {\thisrow{epsilon}},
					y expr = {\thisrow{mean_symcond1}}
				]{data/DataCondNr_FSMDRK_HB-I2DRK4-2s_EJ_PR_Tend=1.25_splitting=0_damped_newton_backslash_Nt=1_timestep1.csv};
			\addlegendentry{HB-I2DRK4-2s};
			
		    \addplot table[
					x expr = {\thisrow{epsilon}},
					y expr = {\thisrow{mean_symcond1}}
				]{data/DataCondNr_FSMDRK_HB-I3DRK6-2s_EJ_PR_Tend=1.25_splitting=0_damped_newton_backslash_Nt=1_timestep1.csv};
			\addlegendentry{HB-I3DRK6-2s};	
				
			\addplot table[
					x expr = {\thisrow{epsilon}},
					y expr = {\thisrow{mean_symcond1}}
				]{data/DataCondNr_FSMDRK_HB-I4DRK8-2s_EJ_PR_Tend=1.25_splitting=0_damped_newton_backslash_Nt=1_timestep1.csv};
			\addlegendentry{HB-I4DRK8-2s};	
				
			\addplot table[
					x expr = {\thisrow{epsilon}},
					y expr = {\thisrow{mean_symcond1}}
				]{data/DataCondNr_FSMDRK_SSP-I2DRK3-2s_EJ_PR_Tend=1.25_splitting=0_damped_newton_backslash_Nt=1_timestep1.csv};
			\addlegendentry{SSP-I2DRK3-2s};			
				
		    \addplot table[
					x expr = {\thisrow{epsilon}},
					y expr = {\thisrow{mean_symcond1}}
				]{data/DataCondNr_FSMDRK_SSP-I2DRK4-5s_EJ_PR_Tend=1.25_splitting=0_damped_newton_backslash_Nt=1_timestep1.csv};
			\addlegendentry{SSP-I2DRK4-5s};			
			
	        	\addlegendimage{empty legend}
	        	\addlegendentry{}
				
			\addplot table[
					x expr = {\thisrow{epsilon}},
					y expr = {\thisrow{mean_symcond1}}
				]{data/DataCondNr_FSMDRK_HB-I2DRK6-3s_EJ_PR_Tend=1.25_splitting=0_damped_newton_backslash_Nt=1_timestep1.csv};
			\addlegendentry{HB-I2DRK6-3s};	
				
			\addplot table[
					x expr = {\thisrow{epsilon}},
					y expr = {\thisrow{mean_symcond1}}
				]{data/DataCondNr_FSMDRK_HB-I3DRK9-3s_EJ_PR_Tend=1.25_splitting=0_damped_newton_backslash_Nt=1_timestep1.csv};
			\addlegendentry{HB-I3DRK9-3s};	
				
		\end{loglogaxis}
		\end{tikzpicture}
	  }
	  {
	  \includegraphics{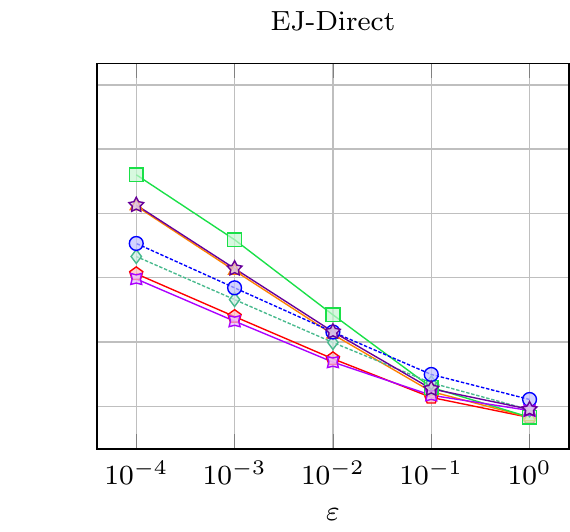}
	  }%
\end{subfigure}%
\begin{subfigure}{0.325\textwidth}
  \centering
  \ifthenelse{\boolean{compilefromscratch}}{
        \tikzsetnextfilename{DataCondNr_FSMDRK-Direct_rec_PR_Tend=1.25} 
		\begin{tikzpicture}
		\begin{loglogaxis}[
			title={rec-Direct},
			height=5.5cm,
			xlabel={$\varepsilon$},
			ylabel={\phantom{$\text{cond}(F')$}},
			xtick={1e0,1e-1,1e-2,1e-3,1e-4},
	        ymax = 1e17, ymin = 1e-1,
			ytick={1e1,1e4,1e7,1e10,1e13,1e16},
			ymajorticks=false,
			grid=major,
			legend style={nodes={scale=0.95, transform shape}},
			cycle list name = rainbow
			]
			\addplot table[
					x expr = {\thisrow{epsilon}},
					y expr = {\thisrow{mean_symcond1}}
				]{data/DataCondNr_FSMDRK_HB-I2DRK4-2s_PR_Tend=1.25_splitting=0_damped_newton_backslash_Nt=1_timestep1.csv};
		    \addplot table[
					x expr = {\thisrow{epsilon}},
					y expr = {\thisrow{mean_symcond1}}
				]{data/DataCondNr_FSMDRK_HB-I3DRK6-2s_PR_Tend=1.25_splitting=0_damped_newton_backslash_Nt=1_timestep1.csv};
			\addplot table[
					x expr = {\thisrow{epsilon}},
					y expr = {\thisrow{mean_symcond1}}
				]{data/DataCondNr_FSMDRK_HB-I4DRK8-2s_PR_Tend=1.25_splitting=0_damped_newton_backslash_Nt=1_timestep1.csv};
			\addplot table[
					x expr = {\thisrow{epsilon}},
					y expr = {\thisrow{mean_symcond1}}
				]{data/DataCondNr_FSMDRK_SSP-I2DRK3-2s_PR_Tend=1.25_splitting=0_damped_newton_backslash_Nt=1_timestep1.csv};
		    \addplot table[
					x expr = {\thisrow{epsilon}},
					y expr = {\thisrow{mean_symcond1}}
				]{data/DataCondNr_FSMDRK_SSP-I2DRK4-5s_PR_Tend=1.25_splitting=0_damped_newton_backslash_Nt=1_timestep1.csv};
			\addplot table[
					x expr = {\thisrow{epsilon}},
					y expr = {\thisrow{mean_symcond1}}
				]{data/DataCondNr_FSMDRK_HB-I2DRK6-3s_PR_Tend=1.25_splitting=0_damped_newton_backslash_Nt=1_timestep1.csv};
			\addplot table[
					x expr = {\thisrow{epsilon}},
					y expr = {\thisrow{mean_symcond1}}
				]{data/DataCondNr_FSMDRK_HB-I3DRK9-3s_PR_Tend=1.25_splitting=0_damped_newton_backslash_Nt=1_timestep1.csv};
		\end{loglogaxis}
		\end{tikzpicture}
	  }
	  {
	  \includegraphics{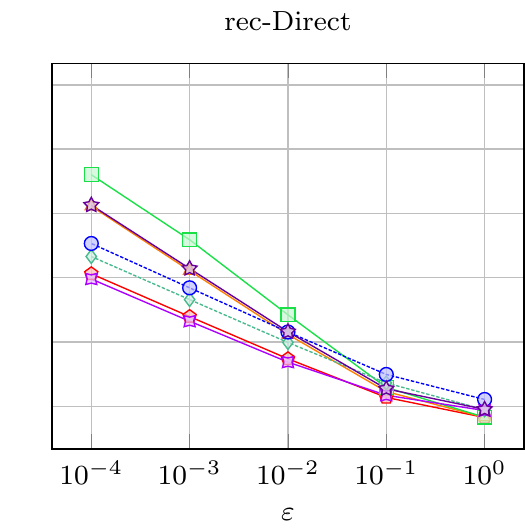}
	  }%
\end{subfigure}
\vspace{-2mm}
\hspace{1.2cm}\ifthenelse{\boolean{compilefromscratch}}{
				\ref{FSMDRK_schemes-direct-PR}
			  }
	  		  {
			  \includegraphics{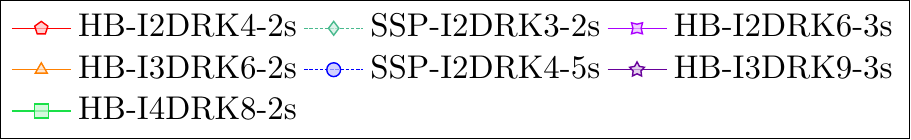}
			  }

\caption{FSMDRK schemes applied as a Direct method (Figure~\ref{fig:AMDRK-scheme}) to the PR problem~\eqref{eq:PR}. The average condition number in the 1-norm of the Newton Jacobian is shown for different values of $\varepsilon$. The behavior $\text{cond}(F') = \mathcal{O}(\varepsilon^{-\mathtt{r}})$ is observed, $\mathtt{r}$ being the amount of derivatives. A single timestep ($N=1$) of size $T_{\text{end}} = 1.25$ ($\Delta t = 1.25$) has been considered with tolerances, Eq.~\eqref{eq:criteria_newton}, set to $10^{-12}$ under a maximum of 1000 iterations.}

\label{fig:CondNr-FSMDRK_direct-Pareschi-Russo}
\end{figure}

\subsection{Conditioning of a two-variable ODE system}
Eq.~\eqref{eq:PR}, but also van der Pol and Kaps equation, can be put into the form
\begin{align}
y_1'(t) &= f_1(y_1, y_2) \\
y_2'(t) &= f_2(y_1, y_2) + \frac{g(y_1, y_2)}{\varepsilon} \, ,
\end{align}
in which $f_1, f_2$ and $g$ are smooth functions. In order to get a basic understanding of how the condition number of the Newton Jacobian behaves in terms of $\varepsilon$, we consider the simplified system
\begin{equation}\label{eq:2var_systemODEs}
y_1'(t) = y_2, \quad y_2'(t) = \alpha y_1 + \frac{g(y_1, y_2)}{\varepsilon}, \quad 0 \leq t \leq T \, ,
\end{equation}
where $\alpha \in \mathbb{R}$ and $g: \mathbb{R}^2 \rightarrow \mathbb{R}$ is smooth. We are interested in the analytical form of the Newton Jacobian obtained from the (A)MDRK method in the case that $\varepsilon \ll \Delta t$.

\begin{example}\label{ex:taylor2_analytical}
Applying implicit Taylor order $\mathtt{r}=2$ to the system of ODEs \eqref{eq:2var_systemODEs} yields a system $F = (y_1^n, y_2^n)^T$ with
\begin{equation}\label{eq:taylor2_analytical_system}
F = \left[\begin{array}{l}
y_1^{n+1} - \Delta t y_2^{n+1} + \frac{{\Delta t }^2}{2}\left(\alpha y_1^{n+1} + \frac{g^{n+1}}{\varepsilon}\right)\\[0.75em]
y_2^{n+1} - \Delta t\left(\alpha y_1^{n+1} + \frac{g^{n+1}}{\varepsilon}\right) + \frac{{\Delta t }^2}{2} \left(\alpha y_2^{n+1} + \frac{\partial_{y^{}_{1}}g^{n+1}}{\varepsilon}y_2^{n+1} + \frac{\partial_{y^{}_{2}}g^{n+1}}{\varepsilon}(\alpha y_1^{n+1}+\frac{g^{n+1}}{\varepsilon})\right)
\end{array}\right].
\end{equation}
Note that $g^{n+1}$ has been defined as $g(y_1^{n+1}, y_2^{n+1})$.
\begin{proposition}\label{prop:taylor2_analyticalCond}
Assume that $g$ and all its partial derivatives are $\mathcal{O}(1)$, and assume that $\varepsilon \ll \Delta t$. Then, the  Newton Jacobian $F'$ obtained from solving the system of ODEs \eqref{eq:2var_systemODEs} with the implicit Taylor method of order $\mathtt{r}=2$ behaves in the 1-norm as
\[
{\| F'\|}_1 = \mathcal{O}\!\left(\frac{{\Delta t}^2}{\varepsilon^2}\right) \, , \quad \text{and} \quad \cond(F') = \mathcal{O}\left(\varepsilon^{-1}\right) \, .
\]
\end{proposition}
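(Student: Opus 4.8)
The plan is to work directly from the explicit form \eqref{eq:taylor2_analytical_system}: differentiate $F=(F_1,F_2)^T$ entrywise and organise each of the four partials $\partial_{y_j}F_i$ as a polynomial in $\varepsilon^{-1}$, using the hypothesis that $g$ and its derivatives are $\mathcal{O}(1)$ to treat every coefficient as $\mathcal{O}(1)$. First I would record the leading orders. Differentiating $F_1$ retains at most one factor $\varepsilon^{-1}$ (from the term $\tfrac{\Delta t^2}{2}g/\varepsilon$), so the first row obeys $\partial_{y_1}F_1=\tfrac{\Delta t^2}{2}\tfrac{\partial_{y_1}g}{\varepsilon}+\mathcal{O}(1)$ and $\partial_{y_2}F_1=\tfrac{\Delta t^2}{2}\tfrac{\partial_{y_2}g}{\varepsilon}+\mathcal{O}(1)$, both of order $\Delta t^2/\varepsilon$. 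The second equation carries the genuinely stiff term $\tfrac{\Delta t^2}{2}\tfrac{\partial_{y_2}g}{\varepsilon}\tfrac{g}{\varepsilon}$, whose differentiation produces $\varepsilon^{-2}$ contributions, so $\partial_{y_1}F_2$ and $\partial_{y_2}F_2$ are $\mathcal{O}(\Delta t^2/\varepsilon^2)$ with leading coefficients $\tfrac{\Delta t^2}{2}\partial_{y_1}(g\,\partial_{y_2}g)$ and $\tfrac{\Delta t^2}{2}\partial_{y_2}(g\,\partial_{y_2}g)$.

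The norm estimate is then immediate: since the $1$-norm is the maximum absolute column sum, and each column of $F'$ holds one first-row entry of order $\varepsilon^{-1}$ and one second-row entry of order $\varepsilon^{-2}$, the column sums are dominated by the second row, giving $\|F'\|_1=\mathcal{O}(\Delta t^2/\varepsilon^2)$ and proving the first assertion.

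For the condition number I would use the closed-form $2\times2$ inverse, $\cond_1(F')=\|F'\|_1\,\|{F'}^{-1}\|_1$ with $\|{F'}^{-1}\|_1=\|\adj F'\|_1/|\det F'|$. The adjugate only rearranges the four entries, so $\|\adj F'\|_1=\mathcal{O}(\Delta t^2/\varepsilon^2)$ as well, and everything reduces to pinning down the exact order of $\det F'=\partial_{y_1}F_1\,\partial_{y_2}F_2-\partial_{y_2}F_1\,\partial_{y_1}F_2$. Each product pairs an $\varepsilon^{-1}$ factor with an $\varepsilon^{-2}$ factor, so a priori $\det F'=\mathcal{O}(\varepsilon^{-3})$; collecting the $\varepsilon^{-3}$ coefficient, the common piece $\tfrac{\Delta t^4}{4}\partial_{y_1}g\,(\partial_{y_2}g)^2$ cancels between the two products, leaving $\tfrac{\Delta t^4}{4}\,g\,(\partial_{y_1}g\,\partial_{y_2 y_2}g-\partial_{y_2}g\,\partial_{y_1 y_2}g)$, which is $\mathcal{O}(\Delta t^4)$ and, for generic $g$, bounded away from zero. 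Hence $|\det F'|=\Theta(\Delta t^4/\varepsilon^3)$, so $\|{F'}^{-1}\|_1=\mathcal{O}(\Delta t^2/\varepsilon^2)\big/\Theta(\Delta t^4/\varepsilon^3)=\mathcal{O}(\varepsilon/\Delta t^2)$ and finally $\cond_1(F')=\mathcal{O}(\Delta t^2/\varepsilon^2)\cdot\mathcal{O}(\varepsilon/\Delta t^2)=\mathcal{O}(\varepsilon^{-1})$. Equivalently, one may factor $F'=\tfrac{\Delta t^2}{2\varepsilon}\,\mathrm{diag}(1,\varepsilon^{-1})\,(\hat M_0+\mathcal{O}(\varepsilon))$ with $\hat M_0$ the $\mathcal{O}(1)$ matrix whose rows are $(\partial_{y_1}g,\partial_{y_2}g)$ and $(\partial_{y_1}(g\,\partial_{y_2}g),\partial_{y_2}(g\,\partial_{y_2}g))$, and conclude from $\cond_1(F')\le\cond_1(\mathrm{diag}(1,\varepsilon^{-1}))\,\cond_1(\hat M)=\varepsilon^{-1}\,\mathcal{O}(1)$.

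The main obstacle is this determinant step: an \emph{upper} bound on $\|{F'}^{-1}\|_1$ needs the matching \emph{lower} bound $|\det F'|=\Omega(\Delta t^4/\varepsilon^3)$, i.e. the determinant must stay at its naive order rather than collapsing through cancellation. This is exactly the point where the stiff data interact: the leading coefficients of both rows are built from $\partial_{y_2}g$ and $g\,\partial_{y_2}g$, which is what forces the partial cancellation above and simultaneously lowers $\|{F'}^{-1}\|_1$ by one power of $\varepsilon$ relative to the worst case. The result therefore holds \emph{provided} the surviving coefficient $g\,(\partial_{y_1}g\,\partial_{y_2 y_2}g-\partial_{y_2}g\,\partial_{y_1 y_2}g)$ is bounded away from zero — a genuine nondegeneracy condition hidden behind the $\mathcal{O}(1)$ hypothesis. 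For degenerate $g$ with $\partial_{y_2 y_2}g=\partial_{y_1 y_2}g=0$ this coefficient vanishes, $\det F'$ drops to order $\varepsilon^{-2}$, and the conditioning deteriorates correspondingly; keeping track of which $\varepsilon^{-3}$ terms cancel and confirming the remainder is nonzero is the one part of the argument that is not purely routine bookkeeping.
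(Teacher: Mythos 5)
Your proof follows essentially the same route as the paper's: bound $\|F'\|_1$ entrywise, invert via the adjugate, and pin down the $\varepsilon^{-3}$ coefficient of $\det F'$, which after the cancellation of $\partial_{y_1}g\,(\partial_{y_2}g)^2$ is exactly the paper's $\tfrac{1}{4}\,\Delta t^4\varepsilon^{-3}\,g\,Dg$ with $Dg=\partial_{y_1}g\,\partial_{y_2y_2}g-\partial_{y_2}g\,\partial_{y_1y_2}g$. The nondegeneracy caveat you flag --- that the upper bound on $\cond(F')$ needs $g\,Dg$ bounded away from zero and degrades when it vanishes --- is precisely what the paper concedes in the remark immediately following the proposition, where it notes $Dg=0$ for the Pareschi--Russo problem and that the observed conditioning is then $\mathcal{O}(\varepsilon^{-2})$ rather than $\mathcal{O}(\varepsilon^{-1})$.
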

\end{example}

\begin{customremark}{\arabic{remark} (part 1)}
The behavior shown in Prop.~\ref{prop:taylor2_analyticalCond} is not what we observe from the numerical experiments in Figs. \ref{fig:CondNr-DIMDRK-direct-Pareschi-Russo} and \ref{fig:CondNr-FSMDRK_direct-Pareschi-Russo}, where we obtained $ \cond(F') = \mathcal{O}(\varepsilon^{-2})$ for two-derivative schemes. There is no contradiction here though. We reason in part 2 of this remark that, often, an order of $\varepsilon$ is gained through the determinant $\det(F')$. 
\end{customremark}

\begin{proof}[Proof of Proposition \ref{prop:taylor2_analyticalCond}]
For simplicity, the notation $(u,v) = (y_1, y_2)$ will be used in what follows. 
From the construction of $F$ as given in Eq. \eqref{eq:taylor2_analytical_system}, it is apparent that the Newton Jacobian satisfies
\[
{\| F'\|}_1 = \mathcal{O}\!\left(\frac{{\Delta t}^2}{\varepsilon^2}\right) \, ,
\] 
under the assumption that $\varepsilon \ll \Delta t$. For the behavior of the inverse matrix ${F'}^{-1}$ we make use of the identity $ A^{-1} = \frac{1}{\det(A)}\adj(A)$. As $F'$ is a $2\times 2$ matrix, its adjugate is obtained from simply shuffling terms and possibly adding a minus sign. Consequently, the behavior of its norm remains unaffected w.r.t $\varepsilon$ and $\Delta t$. The determinant can be explicitly computed as
\begin{align}\label{eq:taylor2_analyticalDer}
\det(F') = 1 + \frac{1}{4}\frac{{\Delta t}^4}{\varepsilon^3}\underbrace{\left( \partial_{u}g\partial_{vv}g - \partial_{v}g\partial_{uv}g\right)}_{Dg}g + \mathcal{O}(\varepsilon^{-2}) \, .
\end{align}
So in total:
\begin{align*}
\cond(F') &= {\|F' \|}_1 \cdot {\|{F'}^{-1}\|}_1
= \frac{{\|F' \|}_1{\|\adj(F') \|}_1}{\left|\det(F')\right|}
= \mathcal{O}\!\left(\frac{{\Delta t}^2}{\varepsilon^2}\right)\mathcal{O}\!\left(\frac{\varepsilon^3}{{\Delta t}^4}\right)\mathcal{O}\!\left(\frac{{\Delta t}^2}{\varepsilon^2}\right)
= \mathcal{O}(\varepsilon^{-1}) \, ,
\end{align*}
under the assumption that $\varepsilon \ll \Delta t$.
\end{proof}

\addtocounter{remark}{-1}
\begin{customremark}{\arabic{remark} (part 2)}
In equation \eqref{eq:taylor2_analyticalDer} we observe that $\det(F') = \mathcal{O}\!\left(\frac{{\Delta t}^4}{\varepsilon^3}\right)$ under the assumption that $\varepsilon \ll \Delta t$. In many cases we nonetheless observe ${\det(F') = \mathcal{O}(\varepsilon^{-2})}$:
\begin{enumerate}
\item The values are mainly decided by $g(y_1,y_2)$ and a function of partial derivatives which we have denoted $Dg(y_1, y_2)$. In case of the PR-problem \eqref{eq:PR}, $\alpha = 1$ and $g(y_1, y_2) = \sin(y_1) - y_2$. Therefore, any mixed partial derivatives of $g$, or second partial derivative of $g$ w.r.t $y_2$ equals 0. So for the PR-problem $Dg = 0$.

\item In general, it does not need to hold true that $Dg = 0$. The van der Pol problem (as in \cite{SealSchuetz19}) for instance has ${g(y_1,y_2) = (1-y_1^2)y_2 - y_1}$, and therefore yields $Dg = 2y_1( 1 - y_1^2)$. Here, a clarification can be given by the (very) harsh restriction set in Prop.~\ref{prop:taylor2_analyticalCond} that $g$ and all its partial derivatives are $\mathcal{O}(1)$, which typically is not true. For well-prepared initial conditions and an asymptotically consistent algorithm, $g = \mathcal{O}{(\varepsilon)}$ \cite{RuBosc09}.
\end{enumerate}
\end{customremark}

A similar type of effect takes place for a higher amount of derivatives $\mathtt{r}$;  the resulting conditioning is $\mathcal{O}(\varepsilon^{-\mathtt{r}})$.

\section{Derivatives as members of the solution}\label{sec:DerSol}
One of the main issues for the $\mathcal{O}(\varepsilon^{-\mathtt{r}})$ conditioning of the direct (A)MDRK method is the fact that with each higher derivative $y^{(k)}$, the order of $\varepsilon$ increases simultaneously. Such behavior is to be expected due to a built-in dependency on the lower order derivatives, \emph{i.e.}
\begin{align}\label{eq:timeDerDepencies}
\begin{split}
y^{(1)} &= \Phi(y), \\
y^{(k)} &= \Psi_k(y,y^{(1)},\dots,y^{(k-1)}), \quad 2 \leq k \leq \mathtt{r} \, .
\end{split}
\end{align}
The operator $\Psi_k$ is then either the relation that uses the Exact Jacobians (EJ) as in \eqref{eq:FaaDiBruno-timeDer}, so that 
\begin{subequations}\label{eq:FaaDiBruno-timeDer-operator}
\begin{align}
\Psi_2 &= \Phi'(y)y^{(1)}  \, , \\
\Psi_3 &= \Phi''(y) \bullet \left[y^{(1)} {\color{gray}|} y^{(1)}\right] +  \Phi'(y)y^{(2)} \, , \\ 
\Psi_4 &= \Phi'''(y)\bullet \left[y^{(1)} {\color{gray}|} y^{(1)} {\color{gray}|} y^{(1)}\right] + 3\Phi''(y) \bullet \left[y^{(1)} {\color{gray}|} y^{(2)}\right] + \Phi'(y)y^{(3)}  \, ,
\end{align}
\end{subequations}
and so forth, or either is given recursively from \eqref{eq:timeDerRecursive}, so that
\begin{equation}\label{eq:timeDerRecursive-operator}
\Psi_{k+1} = {\left[\frac{\mathrm{d}^{k-1}\Phi(y)}{{\mathrm{d}t}^{k-1}}\right]}'y^{(1)} \, ,
\end{equation}
for $k = 1,\dots, \mathtt{r}-1$.

\begin{customexample}{\arabic{example} (part 1)}
Consider the implicit Taylor scheme of order $\mathtt{r}=3$, then there is only a single stage $Y = y$ to solve for. In terms of the relations \eqref{eq:timeDerDepencies}, the Newton system $F(y) = 0$ simply writes as
\begin{equation}\label{eq:implicitTaylor3Scalar-simple}
y - \Delta t \Phi(y) + \frac{{\Delta t}^2}{2} \Psi_2- \frac{{\Delta t}^3}{6} \Psi_3 - y^n = 0 \, .
\end{equation} 
\end{customexample}
From the above example it is clear that computing $F'(Y)$ necessitates deriving the formulas $\Psi_k$ with respect to $y$,
\begin{equation}\label{eq:jacobianTimeDer}
\frac{\partial y^{(k)}}{\partial y} = \partial_{y}\Psi_k + \sum\limits_{m=1}^{k-1}\partial_{y^{(m)}}\Psi_k \cdot \frac{\partial y^{(m)}}{\partial y} \, .
\end{equation}
It is exactly because of this recursive dependency on lower order derivatives that the order of $\varepsilon$ increases in $\cond(F')$. A similar recursion holds true when calculating the approximate values $\widetilde{y}^{(k)}$ with the recursive formulas \eqref{eq:timeDerApprox}-\eqref{eq:approxPhiValues}.

In order to better understand the $\varepsilon$-behavior, we investigate a linear problem in the sequel. To reduce the complexity of involved formulas, we only consider scalar problems ($m=1$) in this section.

\subsection{$\varepsilon$-scaled Dahlquist test equation}
We consider an $\varepsilon$-scaled Dahlquist test problem
\begin{equation}\label{eq:Dahlquist}
	y' = \frac{\lambda}{\varepsilon}y, \qquad y(0) = 1,
\end{equation}
with the exact solution $y(t) = \mathrm{e}^{(\lambda/\varepsilon)t}$. As the equation is linear, the AMDRK method (A-Direct) coincides with the MDRK method that uses EJ (EJ-Direct), see for example \cite[Proposition~1]{2018_Baeza_EtAl_Arxiv}\footnote{The AMDRK method approximates the derivatives $y^{(k)}$ on the basis of finite differences. For linear problems finite differences are exact.}. The rationale behind the observed behavior follows immediately from the next lemma.
\begin{lemma}
The derivatives $y^{(k)}$ and their Jacobians $\partial_y y^{(k)}$ of the $\varepsilon$-scaled Dahlquist test are $\mathcal{O}(\varepsilon^{-k})$, {i.e.}
\begin{alignat*}{2}
y^{(k)} &= \dotPhi{k-1} =  \left(\frac{\lambda}{\varepsilon} \right)^k y = \mathcal{O}(\varepsilon^{-k}), \qquad
\frac{\partial y^{(k)}}{\partial y} &=& {\left[\frac{\mathrm{d}^{k-1}\Phi}{{\mathrm{d}t}^{k-1}}\right]}' = \left(\frac{\lambda}{\varepsilon} \right)^k = \mathcal{O}(\varepsilon^{-k})\, .
\end{alignat*}
\end{lemma}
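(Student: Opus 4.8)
The plan is to prove both identities simultaneously by induction on $k$, exploiting the simplification afforded by linearity. For the scalar flux $\Phi(y) = (\lambda/\varepsilon)y$ one has $\Phi'(y) \equiv \lambda/\varepsilon$ and $\Phi''(y) = \Phi'''(y) = \cdots = 0$, so every tensor action in Fa\'a Di Bruno's formula \eqref{eq:FaaDiBruno-timeDer} except the leading one drops out; equivalently, the recursion \eqref{eq:timeDerRecursive} is the only machinery one needs. The key structural observation is that the two claims are coupled: the Jacobian at level $k$ is precisely the quantity that drives the recursion up to level $k+1$, so it is cleanest to carry both statements through a single induction rather than proving them separately.

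For the base case $k=1$ the claim is immediate, since $y^{(1)} = \Phi(y) = (\lambda/\varepsilon)y$ and hence $\partial_y y^{(1)} = \lambda/\varepsilon$. For the inductive step, assume $y^{(k)} = (\lambda/\varepsilon)^k y$. Differentiating in $y$ gives
\[
\left[\frac{\mathrm{d}^{k-1}\Phi}{{\mathrm{d}t}^{k-1}}\right]' = \frac{\partial y^{(k)}}{\partial y} = \left(\frac{\lambda}{\varepsilon}\right)^k ,
\]
which is the second asserted identity at level $k$. Feeding this into the recursion \eqref{eq:timeDerRecursive} yields
\[
y^{(k+1)} = \left[\frac{\mathrm{d}^{k-1}\Phi}{{\mathrm{d}t}^{k-1}}\right]' y^{(1)} = \left(\frac{\lambda}{\varepsilon}\right)^{k} \cdot \frac{\lambda}{\varepsilon}\, y = \left(\frac{\lambda}{\varepsilon}\right)^{k+1} y ,
\]
which closes the induction and establishes the first identity as well.

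Finally, since $\varepsilon \ll 1$ while $\lambda$ is a fixed constant, each factor $(\lambda/\varepsilon)^k$ is $\mathcal{O}(\varepsilon^{-k})$, giving the stated asymptotics for both $y^{(k)}$ and $\partial_y y^{(k)}$. I expect no genuine obstacle here: the result is elementary once linearity kills the higher Jacobians. The only point deserving care is the bookkeeping of the coupled induction described above, together with noting (as the excerpt already remarks via \cite{2018_Baeza_EtAl_Arxiv}) that for this linear problem the approximate derivatives $\widetilde{y}^{(k)}$ coincide with the exact $y^{(k)}$, so the same $\mathcal{O}(\varepsilon^{-k})$ scaling governs A-Direct and EJ-Direct alike.
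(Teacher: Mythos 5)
Your proof is correct; the paper itself states this lemma without proof, treating it as immediate for the linear flux $\Phi(y)=(\lambda/\varepsilon)y$, and your coupled induction via the recursion \eqref{eq:timeDerRecursive} is exactly the natural verification of that claim. The index bookkeeping ($y^{(k+1)}=\bigl[\tfrac{\mathrm{d}^{k-1}\Phi}{\mathrm{d}t^{k-1}}\bigr]'y^{(1)}$ with $\bigl[\tfrac{\mathrm{d}^{k-1}\Phi}{\mathrm{d}t^{k-1}}\bigr]'=\partial_y y^{(k)}$) is consistent, so there is nothing to add.
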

It would be more optimal for the conditioning of the Jacobian to unfold the $\varepsilon$-dependency through its recursion given by $\Psi_k$ in Eqs. \eqref{eq:timeDerDepencies}. When applying EJ (and thus also for AMDRK) there holds,
\begin{equation}\label{eq:EJ_Dahlquist}
\Psi_k = \frac{\lambda}{\varepsilon} y^{(k-1)} \, ,
\end{equation}
whereas recursion (rec-Direct) gives the relation
\begin{equation}\label{eq:rec_Dahlquist}
\Psi_k = \left(\frac{\lambda}{\varepsilon} \right)^{k-1} y^{(1)} \, ,
\end{equation}
for $k = 1, \dots, \mathtt{r}$. Already here we can notice that the first out of these two is more favorable, as it unfolds the $\varepsilon$-dependency more thoroughly.

\subsection{Recursive dependencies as additional system equations}
In order to achieve such an unfolding of the $\varepsilon$-dependency, Baeza et al. \cite{2020_Baeza_EtAl} suggest to take the derivatives as members of the solution. Instead of directly solving for $Y$, additionally, the independent unknowns
\begin{equation}
~~ z_k \approx y^{(k)} \, , \qquad 1 \leq k \leq \mathtt{r},
\end{equation}
are sought for using the same recursive dependencies
\begin{align}\label{eq:DerSolTimeDerDepencies}
\begin{split}
z_1 &= \Phi(z_0), \\
z_k &= \Psi_k(z_0,z_1,\dots,z_{k-1}), \quad 2 \leq k \leq \mathtt{r},
\end{split}
\end{align}
where we have defined $z_0:=Y$. In constrast to the single relation $F(Y) = 0$, we now solve the $\mathtt{r} + 1$ relations as a bigger system $\mathcal{F}(z) = 0$, with ${z:=(z_0, z_1, \dots, z_{\mathtt{r}})}$. In summary, the recursive dependency in one single formula is traded off for a larger system containing the $\mathtt{r}$ additional relations given by \eqref{eq:DerSolTimeDerDepencies}.

\addtocounter{example}{-1}
\begin{customexample}{\arabic{example} (part 2)}\label{ex:DerSol-Taylor3}
For the third order Taylor scheme \eqref{eq:implicitTaylor3Scalar-simple},
\begin{equation}
z_0 - \Delta t z_1 + \frac{{\Delta t}^2}{2} z_2 - \frac{{\Delta t}^3}{6} z_3 - y^n = 0 \, ,
\end{equation}
 and
\begin{equation}
\mathcal{F}(z) = \begin{bmatrix}
z_0 - \Delta t z_1 + \frac{{\Delta t}^2}{2} z_2 - \frac{{\Delta t}^3}{6} z_3 - y^n \\
\Phi(z_0) - z_1 \\
\Psi_2(z_0,z_1) - z_2 \\
\Psi_3(z_0,z_1,z_{2}) - z_3
\end{bmatrix} \, .
\end{equation}
The Jacobian is now less clustered, in our example
\begin{equation}\label{eq:JacobianTaylor3DerSol}
\mathcal{F}'(z) = \begin{bmatrix}
1 & -\Delta t & \frac{{\Delta t}^2}{2} & - \frac{{\Delta t}^3}{6} \\
\Phi'(z_0) & -1 & 0 & 0 \\
\partial_{z_0}\Psi_2 & \partial_{z_1}\Psi_2 & -1 & 0 \\
\partial_{z_0}\Psi_3 & \partial_{z_1}\Psi_3 & \partial_{z_2}\Psi_3 & -1 
\end{bmatrix} \, .
\end{equation}
In the case of the $\varepsilon$-scaled Dahlquist test \eqref{eq:Dahlquist}, the relations \eqref{eq:EJ_Dahlquist} and \eqref{eq:rec_Dahlquist} respectively yield
\begin{equation}
\mathcal{F}_{\normalfont\text{EJ}}'(z) = \begin{bmatrix}
1 & -\Delta t & \frac{{\Delta t}^2}{2} & -\frac{{\Delta t}^3}{6}\\
-\frac{1}{\varepsilon} & -1 & 0 & 0  \\
0 & -\frac{1}{\varepsilon}& -1 & 0 \\
0 & 0 & -\frac{1}{\varepsilon}& -1 \\
\end{bmatrix} \quad \text{and} \quad \mathcal{F}_{\normalfont\text{rec}}'(z) = \begin{bmatrix}
1 & -\Delta t & \frac{{\Delta t}^2}{2} & -\frac{{\Delta t}^3}{6}\\
-\frac{1}{\varepsilon} & -1 & 0 & 0  \\
0 & -\frac{1}{\varepsilon}& -1 & 0 \\
0 & \frac{1}{\varepsilon^2} & 0 & -1 \\
\end{bmatrix}.
\end{equation}
\end{customexample}

Regarding AMDRK schemes, Baeza et al \cite{2020_Baeza_EtAl} introduce the scaled unknowns $z_k \approx {\Delta t}^{k-1}\widetilde{y}^{(k)}$, $1 \leq k \leq \mathtt{r}$. With this choice, analogous relations
\begin{align}\label{eq:A-DerSolTimeDerDepencies}
\begin{split}
z_1 &= \Phi(z_0), \\
z_k &= \widetilde{\Psi}_k(z_0,z_1,\dots,z_{k-1}), \quad 2 \leq k\leq \mathtt{r},
\end{split}
\end{align}
are found on the basis of the formulas \eqref{eq:timeDerApprox}-\eqref{eq:approxPhiValues}, namely
\begin{equation}\label{eq:A-DerSol_Psi_k}
\widetilde{\Psi}_k := {\Delta t}^{k-1}P^{(k-1)}\mathbf{\Phi}^{k-1, \langle n \rangle}_T \, .
\end{equation}
In here, 
\begin{equation}\label{eq:A-DerSol_PhiValues}
\Phi_T^{k-1,n+j} := \Phi\left(z_0 + \Delta t \sum\limits_{m=1}^{k-1}\frac{j^m}{m!} z_m\right) \, ,
\end{equation}
for $j = -p, \dots, p$. Note the slight redefinition of $\Phi_T^{k-1,n+j}$ in contrast to Eq.~\eqref{eq:approxPhiValues} to account for the $\Delta t$ dependency of the $\widetilde{\Psi}_k$.
For a specific example of the AMDRK method, and its Jacobian $\widetilde {\mathcal{F}}'(z)$, we refer the reader to {\cite[Subsection 4.2]{2020_Baeza_EtAl}}.



As a counterpart to the ``Direct'' MDRK methods in Section \ref{sec:MDRK}, we denote the MDRK approach in which the derivatives are taken as members of the solution by ``DerSol''. A summary of the six different MDRK approaches is presented in Figure \ref{fig:AMDRK-scheme}.
From the specific Taylor example that we have investigated in this section, there are two important observations to be made:
\begin{itemize}
\item Most importantly, compared to $\mathcal{F}'_{\text{EJ}}$ and $\mathcal{F}'_{\text{rec}}$, no second order Jacobian $\Phi''$ occurs for the approximate procedure. From \eqref{eq:A-DerSol_Psi_k}-\eqref{eq:A-DerSol_PhiValues} it can be observed that the AMDRK method solely relies on finite difference computations of $\Phi$. Hence, $\Phi'$ is sufficient for retrieving partial derivatives of $\widetilde{\Psi}_k$. If the problem is not scalar anymore  ($m > 1$) no tensor calculations are needed, whereas such calculations can not be avoided for an exact MDRK scheme.
\item Starting from three derivatives, the matrices $\mathcal{F}'_{\text{EJ}}$ and $\mathcal{F}'_{\text{rec}}$ are not the same anymore, i.e. $\mathcal{F}'_{\text{rec}}$ will only fill up the first two columns (and the diagonal), whereas $\mathcal{F}'_{\text{EJ}}$ has a full lower-triangular submatrix. In the numerical results below it will be demonstrated that there is significantly different behavior in the conditioning of these Jacobians.
\end{itemize}

\begin{figure}[h!]
\pgfplotsset{
	title style={font=\footnotesize},
	tick label style={font=\footnotesize},
	label style={font=\footnotesize},
	legend style={font=\footnotesize}
}
\centering
\begin{subfigure}{.325\textwidth}
  \centering
  \ifthenelse{\boolean{compilefromscratch}}{
        \tikzsetnextfilename{DataCondNr_DIMDRK-DerSol_AT_PR_Tend=1.25}
		\begin{tikzpicture}
		\begin{loglogaxis}[
			title={A-DerSol},
			height=5.5cm,
			xlabel={$\varepsilon$},
			ylabel={$\mu\!\left(\text{cond}(\mathcal{F}')\right)$},
			xtick={1e0,1e-1,1e-2,1e-3,1e-4},
	        ymax = 1e17, ymin = 1e-1,
			ytick={1e1,1e4,1e7,1e10,1e13,1e16},
			grid=major,
			legend style={nodes={scale=0.95, transform shape}},
			cycle list name = rainbow2
			]
			\addplot table[
					x expr = {\thisrow{epsilon}},
					y expr = {\thisrow{mean_symcond1}}
				]{data/DataCondNr_DIMDRK_HB-I2DRK4-2s_AT_p=2_dersol_PR_Tend=1.25_splitting=0_damped_newton_backslash_Nt=1_timestep1_stage2.csv};
		    \addplot table[
					x expr = {\thisrow{epsilon}},
					y expr = {\thisrow{mean_symcond1}}
				]{data/DataCondNr_DIMDRK_HB-I3DRK6-2s_AT_p=3_dersol_PR_Tend=1.25_splitting=0_damped_newton_backslash_Nt=1_timestep1_stage2.csv};
		    \addplot table[
					x expr = {\thisrow{epsilon}},
					y expr = {\thisrow{mean_symcond1}}
				]{data/DataCondNr_DIMDRK_HB-I4DRK8-2s_AT_p=4_dersol_PR_Tend=1.25_splitting=0_damped_newton_backslash_Nt=1_timestep1_stage2.csv};
			\addplot table[
					x expr = {\thisrow{epsilon}},
					y expr = {\thisrow{mean_symcond1}}
				]{data/DataCondNr_DIMDRK_SSP-I2DRK3-2s_AT_p=1_dersol_PR_Tend=1.25_splitting=0_damped_newton_backslash_Nt=1_timestep1_stage2.csv};
		    \addplot table[
					x expr = {\thisrow{epsilon}},
					y expr = {\thisrow{mean_symcond1}}
				]{data/DataCondNr_DIMDRK_SSP-I2DRK4-5s_AT_p=2_dersol_PR_Tend=1.25_splitting=0_damped_newton_backslash_Nt=1_timestep1_stage5.csv};
			\addplot table[
					x expr = {\thisrow{epsilon}},
					y expr = {\thisrow{mean_symcond1}}
				]{data/DataCondNr_DIMDRK_ImplTaylor_order=3_AT_p=1_dersol_PR_Tend=1.25_splitting=0_damped_newton_backslash_Nt=1_timestep1_stage1.csv};
			\addplot table[
					x expr = {\thisrow{epsilon}},
					y expr = {\thisrow{mean_symcond1}}
				]{data/DataCondNr_DIMDRK_ImplTaylor_order=4_AT_p=2_dersol_PR_Tend=1.25_splitting=0_damped_newton_backslash_Nt=1_timestep1_stage1.csv};
		\end{loglogaxis}
		\end{tikzpicture}	
	  }
	  {
	  \includegraphics{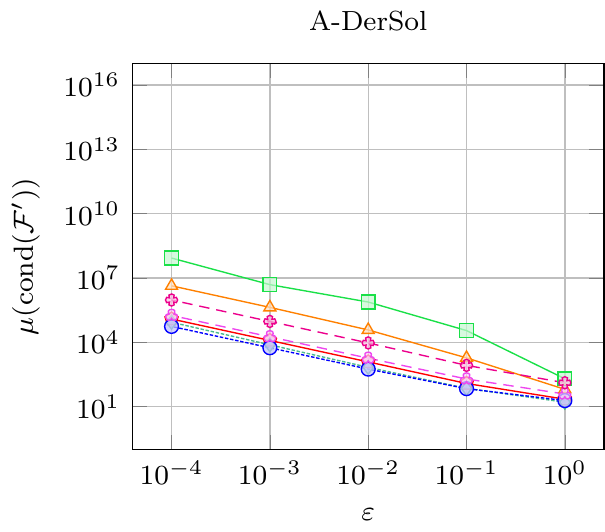}
	  }%
\end{subfigure}%
\begin{subfigure}{0.33\textwidth}
  \centering
  \ifthenelse{\boolean{compilefromscratch}}{
        \tikzsetnextfilename{DataCondNr_DIMDRK-DerSol_EJ_PR_Tend=1.25}
		\begin{tikzpicture}
		\begin{loglogaxis}[
			title={EJ-DerSol},
			height=5.5cm,
			xlabel={$\varepsilon$},
			ylabel={\phantom{$\text{cond}(\mathcal{F}')$}},
			xtick={1e0,1e-1,1e-2,1e-3,1e-4},
			ylabel shift = 1.3em,
	        ymax = 1e17, ymin = 1e-1,
			ytick={1e1,1e4,1e7,1e10,1e13,1e16},
			ymajorticks=false,
			grid=major,
			legend style={nodes={scale=0.95, transform shape}},
			transpose legend,
			legend columns=3,
			legend to name=DIMDRKschemes-dersol-PR,
			cycle list name = rainbow2
			]
			
			\addplot table[
					x expr = {\thisrow{epsilon}},
					y expr = {\thisrow{mean_symcond1}}
				]{data/DataCondNr_DIMDRK_HB-I2DRK4-2s_dersol_EJ_PR_Tend=1.25_splitting=0_damped_newton_backslash_Nt=1_timestep1_stage2.csv};
		    \addlegendentry{HB-I2DRK4-2s};
		    
		    \addplot table[
					x expr = {\thisrow{epsilon}},
					y expr = {\thisrow{mean_symcond1}}
				]{data/DataCondNr_DIMDRK_HB-I3DRK6-2s_dersol_EJ_PR_Tend=1.25_splitting=0_damped_newton_backslash_Nt=1_timestep1_stage2.csv};
			\addlegendentry{HB-I3DRK6-2s};		
			
			\addplot table[
					x expr = {\thisrow{epsilon}},
					y expr = {\thisrow{mean_symcond1}}
				]{data/DataCondNr_DIMDRK_HB-I4DRK8-2s_dersol_EJ_PR_Tend=1.25_splitting=0_damped_newton_backslash_Nt=1_timestep1_stage2.csv};
			\addlegendentry{HB-I4DRK8-2s};
			
			\addplot table[
					x expr = {\thisrow{epsilon}},
					y expr = {\thisrow{mean_symcond1}}
				]{data/DataCondNr_DIMDRK_SSP-I2DRK3-2s_dersol_EJ_PR_Tend=1.25_splitting=0_damped_newton_backslash_Nt=1_timestep1_stage2.csv};
		    \addlegendentry{SSP-I2DRK3-2s};
		    
		    \addplot table[
					x expr = {\thisrow{epsilon}},
					y expr = {\thisrow{mean_symcond1}}
				]{data/DataCondNr_DIMDRK_SSP-I2DRK4-5s_dersol_EJ_PR_Tend=1.25_splitting=0_damped_newton_backslash_Nt=1_timestep1_stage5.csv};
			\addlegendentry{SSP-I2DRK4-5s};
			
	        \addlegendimage{empty legend}
	        \addlegendentry{}
			
			\addplot table[
					x expr = {\thisrow{epsilon}},
					y expr = {\thisrow{mean_symcond1}}
				]{data/DataCondNr_DIMDRK_ImplTaylor_order=3_dersol_EJ_PR_Tend=1.25_splitting=0_damped_newton_backslash_Nt=1_timestep1_stage1.csv};
			\addlegendentry{ImplTaylor-3};
			
			\addplot table[
					x expr = {\thisrow{epsilon}},
					y expr = {\thisrow{mean_symcond1}}
				]{data/DataCondNr_DIMDRK_ImplTaylor_order=4_dersol_EJ_PR_Tend=1.25_splitting=0_damped_newton_backslash_Nt=1_timestep1_stage1.csv};
			\addlegendentry{ImplTaylor-4};
		
		\end{loglogaxis}
		\end{tikzpicture}
	  }
	  {
	  \includegraphics{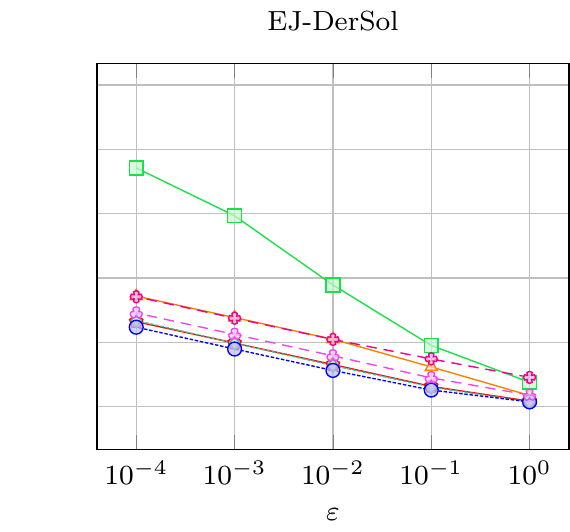}
	  }%
\end{subfigure}%
\begin{subfigure}{0.325\textwidth}
  \centering
  \ifthenelse{\boolean{compilefromscratch}}{
        \tikzsetnextfilename{DataCondNr_DIMDRK-DerSol_rec_PR_Tend=1.25}
		\begin{tikzpicture}
		\begin{loglogaxis}[
			title={rec-DerSol},
			height=5.5cm,
			xlabel={$\varepsilon$},
			ylabel={\phantom{$\text{cond}(\mathcal{F}')$}},
			xtick={1e0,1e-1,1e-2,1e-3,1e-4},
	        ymax = 1e17, ymin = 1e-1,
			ytick={1e1,1e4,1e7,1e10,1e13,1e16},
			ymajorticks=false,
			grid=major,
			legend style={nodes={scale=0.95, transform shape}},
			cycle list name = rainbow2
			]
			\addplot table[
					x expr = {\thisrow{epsilon}},
					y expr = {\thisrow{mean_symcond1}}
				]{data/DataCondNr_DIMDRK_HB-I2DRK4-2s_dersol_PR_Tend=1.25_splitting=0_damped_newton_backslash_Nt=1_timestep1_stage2.csv};
		    \addplot table[
					x expr = {\thisrow{epsilon}},
					y expr = {\thisrow{mean_symcond1}}
				]{data/DataCondNr_DIMDRK_HB-I3DRK6-2s_dersol_PR_Tend=1.25_splitting=0_damped_newton_backslash_Nt=1_timestep1_stage2.csv};
		    \addplot table[
					x expr = {\thisrow{epsilon}},
					y expr = {\thisrow{mean_symcond1}}
				]{data/DataCondNr_DIMDRK_HB-I4DRK8-2s_dersol_PR_Tend=1.25_splitting=0_damped_newton_backslash_Nt=1_timestep1_stage2.csv};
			\addplot table[
					x expr = {\thisrow{epsilon}},
					y expr = {\thisrow{mean_symcond1}}
				]{data/DataCondNr_DIMDRK_SSP-I2DRK3-2s_dersol_PR_Tend=1.25_splitting=0_damped_newton_backslash_Nt=1_timestep1_stage2.csv};
		    \addplot table[
					x expr = {\thisrow{epsilon}},
					y expr = {\thisrow{mean_symcond1}}
				]{data/DataCondNr_DIMDRK_SSP-I2DRK4-5s_dersol_PR_Tend=1.25_splitting=0_damped_newton_backslash_Nt=1_timestep1_stage5.csv};
			\addplot table[
					x expr = {\thisrow{epsilon}},
					y expr = {\thisrow{mean_symcond1}}
				]{data/DataCondNr_DIMDRK_ImplTaylor_order=3_dersol_PR_Tend=1.25_splitting=0_damped_newton_backslash_Nt=1_timestep1_stage1.csv};
			\addplot table[
					x expr = {\thisrow{epsilon}},
					y expr = {\thisrow{mean_symcond1}}
				]{data/DataCondNr_DIMDRK_ImplTaylor_order=4_dersol_PR_Tend=1.25_splitting=0_damped_newton_backslash_Nt=1_timestep1_stage1.csv};
		\end{loglogaxis}
		\end{tikzpicture}
	  }
	  {
	  \includegraphics{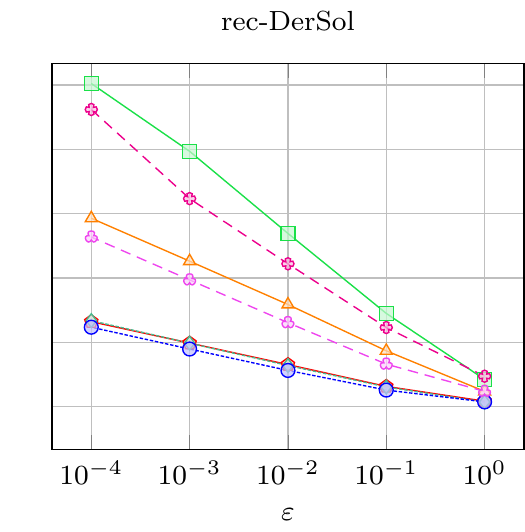}
	  }%
\end{subfigure}
\vspace{-2mm}
\hspace{1.2cm}\ifthenelse{\boolean{compilefromscratch}}{
				\ref{DIMDRKschemes-dersol-PR}
			  }
	  		  {
			  \includegraphics{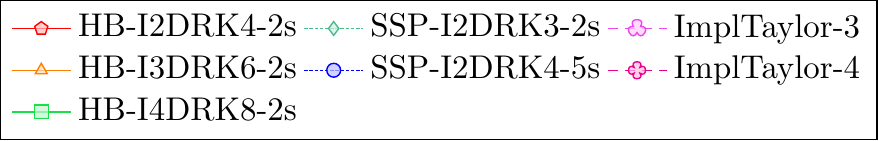}
			  }
			  
\caption{DIMDRK schemes applied as a DerSol method (Figure \ref{fig:AMDRK-scheme}) to the PR problem \eqref{eq:PR}. The average condition number in the 1-norm of the Newton Jacobian obtained from the last RK-stage is shown for different values of $\varepsilon$.  The behavior $\text{cond}(\mathcal{F}') = \mathcal{O}(\varepsilon^{-1})$ is observed for the A-DerSol and EJ-DerSol methods, the rec-DerSol methods seem to behave as $\mathcal{O}(\varepsilon^{-\mathtt{r}+1})$, $\mathtt{r}$ being the amount of derivatives. A single timestep ($N=1$) of size $T_{\text{end}} = 1.25$ ($\Delta t = 1.25$) has been considered with tolerances, Eq. \eqref{eq:criteria_newton}, set to $10^{-12}$ under a maximum of 1000 iterations.}
\label{fig:CondNr-DIMDRK-dersol-Pareschi-Russo}
\end{figure}

When effectively applying the DerSol approach to several DIMDRK schemes, different orders of $\varepsilon$ can be observed in the condition numbers, see Figure~\ref{fig:CondNr-DIMDRK-dersol-Pareschi-Russo}. In comparison to the Direct MDRK approach (see Figure~\ref{fig:CondNr-DIMDRK-direct-Pareschi-Russo}), many schemes behave as
\begin{equation}
\text{cond}(\mathcal{F}') = \mathcal{O}(\varepsilon^{-1}) \, ,
\end{equation}
confirming the succesful unfolding of the $\varepsilon$-dependency through the $\mathtt{r}$ additional equations in the A-DerSol and (partially) in the EJ-DerSol approach. The same can not be said for the rec-DerSol approach, where the order seems to behave as $\mathcal{O}(\varepsilon^{-\mathtt{r}+1})$. This behavior was foreshadowed in the relation \eqref{eq:rec_Dahlquist}; exactly one recursion order is resolved, therefore as well unfolding exactly one order in the $\varepsilon$-dependency. For that reason, it is highly disadvised to apply the rec-DerSol approach for practical purposes.

The EJ-DerSol approach as well does not seem to be flawless when we consider the scheme \mbox{HB-I4DRK8-2s} (Table \ref{tab:HB-I4DRK8-2s}). Instead, the order $\mathcal{O}(\varepsilon^{-3})$ seems to be achieved. In fact, numerically we observe that the scheme tend toward $\mathcal{O}(\varepsilon^{-2})$ for up to $\varepsilon = 10^{-8}$. From running all schemes in Figure \ref{fig:CondNr-DIMDRK-dersol-Pareschi-Russo} up to $\varepsilon = 10^{-8}$, this behavior appears to be unique among the applied DIMDRK schemes. Even more so, when considering different problems (van der Pol and Kaps, see \cite{SealSchuetz19}), all the same schemes show $\mathcal{O}(\varepsilon^{-1})$ up to $\varepsilon = 10^{-8}$, except for HB-I4DRK8-2s applied to van der Pol. For both the A-DerSol and the EJ-DerSol approach, around $\varepsilon \approx 10^{-6}$ there is a sudden change from $\mathcal{O}(\varepsilon^{-1})$ to $\mathcal{O}(\varepsilon^{-4})$ and worse.

This leads us to believe that the observed phenomena of the \mbox{HB-I4DRK8-2s} scheme come as a result of floating-point arithmetic. The double-precision format in \texttt{MATLAB} has a machine precision of ${2^{-52} \approx 2.22\cdot 10^{-16}}$. Given a value $\varepsilon = 10^{-4}$, a four-derivative Runge-Kutta method yields values $\varepsilon^4 = 10^{-16}$ in the denominator of $z_4 = \Psi_3(z_0,z_1,z_2,z_3)$. Albeit the implicit Taylor method of order $4$ giving the requested behavior for the condition number, the Butcher coefficients are larger compared with those of the \mbox{HB-I4DRK8-2s} scheme (see Tables \ref{tab:implTaylor} and \ref{tab:HB-I4DRK8-2s}). The application of many-derivative schemes to stiff problems having very small values $\varepsilon$ should therefore be regarded with sufficient awareness of the machine accuracy being used.

\subsection{The (A)MDRK scheme for a general amount of stages}
 In the most general case, it is not possible to solve for each stage one at a time, an FSMDRK approach is therefore a necessity. Thus, there is a need to solve for $Y = \left({y}^{n,1}, \dots, {y}^{n,\mathtt{s}}\right)$ at once. This entails that for each stage $\mathtt{r}+1$ separate equations have to be solved, leading to a Jacobian matrix $\mathcal{F}(z)$ of size $\left((\mathtt{r}+1)\mathtt{s}M\right)^2$.
   
When using the DerSol approach, one has two options in which one can order all the unknown variables. Either all the variables of the same stage are grouped together, or either the variables are collected by degree of the derivatives. In this work we have chosen to do the ordering in a \textit{stage-based} manner
\begin{equation}
z = \left(z^{n,1}, \dots, {z}^{n,\mathtt{s}} \right) \, ,
\end{equation}
with for each stage $z^{n,{\color{BlueGreen}l}}:=(z^{n,{\color{BlueGreen}l}}_0, z^{n,{\color{BlueGreen}l}}_1, \dots, z^{n,{\color{BlueGreen}l}}_{\mathtt{r}})$. This allows us to obtain an anologous block-structure \eqref{eq:jacobianFSMDRK-Direct} as in the Direct implementation:
\begin{equation}\label{eq:jacobianFSMDRK-DerSol}
\mathcal{F}'(z) = \left[\def\arraystretch{1.3}
\begin{array}{c|c|c}
\partial_{z^{n,1}}\mathcal{F}_{1} & \hdots & \partial_{z^{n,\mathtt{s}}}\mathcal{F}_{1} \\ \hline 
\vdots & \ddots & \vdots \\ \hline
\partial_{z^{n,1}}\mathcal{F}_{{\mathtt{s}}} & \hdots & \partial_{z^{n,\mathtt{s}}}\mathcal{F}_{{\mathtt{s}}}
\end{array}\right] \, ,
\end{equation}
where each block-matrix $\partial_{z^{n,{\color{VioletRed}\nu}}}\mathcal{F}_{{\color{BlueGreen}l}}$ inside is of size $\left((\mathtt{r}+1)M\right)^2$ with a similar construction as the matrix \eqref{eq:JacobianTaylor3DerSol} in Example \ref{ex:DerSol-Taylor3}.

\begin{figure}[h!]
\pgfplotsset{
	title style={font=\footnotesize},
	tick label style={font=\footnotesize},
	label style={font=\footnotesize},
	legend style={font=\footnotesize}
}
\centering
\begin{subfigure}{.325\textwidth}
  \centering
  \ifthenelse{\boolean{compilefromscratch}}{
        \tikzsetnextfilename{DataCondNr_FSMDRK-DerSol_AT_PR_Tend=1.25}
		\begin{tikzpicture}
		\begin{loglogaxis}[
			title={A-DerSol},
			height=5.5cm,
			xlabel={$\varepsilon$},
			ylabel={$\mu\!\left(\text{cond}(\mathcal{F}')\right)$},
			xtick={1e0,1e-1,1e-2,1e-3,1e-4},
	        ymax = 1e17, ymin = 1e-1,
			ytick={1e1,1e4,1e7,1e10,1e13,1e16},
			grid=major,
			legend style={nodes={scale=0.95, transform shape}},
			cycle list name = rainbow
			]
			\addplot table[
					x expr = {\thisrow{epsilon}},
					y expr = {\thisrow{mean_symcond1}}
				]{data/DataCondNr_FSMDRK_HB-I2DRK4-2s_AT_p=2_dersol_PR_Tend=1.25_splitting=0_damped_newton_backslash_Nt=1_timestep1.csv};
		    \addplot table[
					x expr = {\thisrow{epsilon}},
					y expr = {\thisrow{mean_symcond1}}
				]{data/DataCondNr_FSMDRK_HB-I3DRK6-2s_AT_p=3_dersol_PR_Tend=1.25_splitting=0_damped_newton_backslash_Nt=1_timestep1.csv};
		    \addplot table[
					x expr = {\thisrow{epsilon}},
					y expr = {\thisrow{mean_symcond1}}
				]{data/DataCondNr_FSMDRK_HB-I4DRK8-2s_AT_p=4_dersol_PR_Tend=1.25_splitting=0_damped_newton_backslash_Nt=1_timestep1.csv};
			\addplot table[
					x expr = {\thisrow{epsilon}},
					y expr = {\thisrow{mean_symcond1}}
				]{data/DataCondNr_FSMDRK_SSP-I2DRK3-2s_AT_p=1_dersol_PR_Tend=1.25_splitting=0_damped_newton_backslash_Nt=1_timestep1.csv};
		    \addplot table[
					x expr = {\thisrow{epsilon}},
					y expr = {\thisrow{mean_symcond1}}
				]{data/DataCondNr_FSMDRK_SSP-I2DRK4-5s_AT_p=2_dersol_PR_Tend=1.25_splitting=0_damped_newton_backslash_Nt=1_timestep1.csv};
			\addplot table[
					x expr = {\thisrow{epsilon}},
					y expr = {\thisrow{mean_symcond1}}
				]{data/DataCondNr_FSMDRK_HB-I2DRK6-3s_AT_p=3_dersol_PR_Tend=1.25_splitting=0_damped_newton_backslash_Nt=1_timestep1.csv};
			\addplot table[
					x expr = {\thisrow{epsilon}},
					y expr = {\thisrow{mean_symcond1}}
				]{data/DataCondNr_FSMDRK_HB-I3DRK9-3s_AT_p=4_dersol_PR_Tend=1.25_splitting=0_damped_newton_backslash_Nt=1_timestep1.csv};
		\end{loglogaxis}
		\end{tikzpicture}
	  }
	  {
	  \includegraphics{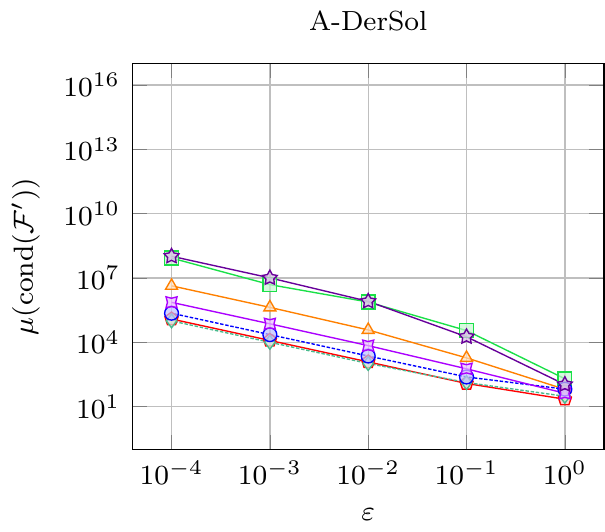}
	  }%
\end{subfigure}%
\begin{subfigure}{0.33\textwidth}
  \centering
  \ifthenelse{\boolean{compilefromscratch}}{
        \tikzsetnextfilename{DataCondNr_FSMDRK-DerSol_EJ_PR_Tend=1.25}
		\begin{tikzpicture}
		\begin{loglogaxis}[
			title={EJ-DerSol},
			height=5.5cm,
			xlabel={$\varepsilon$},
			ylabel={\phantom{$\text{cond}(\mathcal{F}')$}},
			xtick={1e0,1e-1,1e-2,1e-3,1e-4},
			ylabel shift = 1.3em,
	        ymax = 1e17, ymin = 1e-1,
			ytick={1e1,1e4,1e7,1e10,1e13,1e16},
			ymajorticks=false,
			grid=major,
			legend style={nodes={scale=0.95, transform shape}},
			transpose legend,
			legend columns=3,
			legend to name=FSMDRK_schemes-dersol-PR,
			cycle list name = rainbow
			]
			\addplot table[
					x expr = {\thisrow{epsilon}},
					y expr = {\thisrow{mean_symcond1}}
				]{data/DataCondNr_FSMDRK_HB-I2DRK4-2s_dersol_EJ_PR_Tend=1.25_splitting=0_damped_newton_backslash_Nt=1_timestep1.csv};
		    \addlegendentry{HB-I2DRK4-2s};
		    
		    \addplot table[
					x expr = {\thisrow{epsilon}},
					y expr = {\thisrow{mean_symcond1}}
				]{data/DataCondNr_FSMDRK_HB-I3DRK6-2s_dersol_EJ_PR_Tend=1.25_splitting=0_damped_newton_backslash_Nt=1_timestep1.csv};
			\addlegendentry{HB-I3DRK6-2s};
			
			\addplot table[
					x expr = {\thisrow{epsilon}},
					y expr = {\thisrow{mean_symcond1}}
				]{data/DataCondNr_FSMDRK_HB-I4DRK8-2s_dersol_EJ_PR_Tend=1.25_splitting=0_damped_newton_backslash_Nt=1_timestep1.csv};
			\addlegendentry{HB-I4DRK8-2s};	
			
			\addplot table[
					x expr = {\thisrow{epsilon}},
					y expr = {\thisrow{mean_symcond1}}
				]{data/DataCondNr_FSMDRK_SSP-I2DRK3-2s_dersol_EJ_PR_Tend=1.25_splitting=0_damped_newton_backslash_Nt=1_timestep1.csv};
		    \addlegendentry{SSP-I2DRK3-2s};
		    
		    \addplot table[
					x expr = {\thisrow{epsilon}},
					y expr = {\thisrow{mean_symcond1}}
				]{data/DataCondNr_FSMDRK_SSP-I2DRK4-5s_dersol_EJ_PR_Tend=1.25_splitting=0_damped_newton_backslash_Nt=1_timestep1.csv};
			\addlegendentry{SSP-I2DRK4-5s};
			
	        	\addlegendimage{empty legend}
	        	\addlegendentry{}		
			
			\addplot table[
					x expr = {\thisrow{epsilon}},
					y expr = {\thisrow{mean_symcond1}}
				]{data/DataCondNr_FSMDRK_HB-I2DRK6-3s_dersol_EJ_PR_Tend=1.25_splitting=0_damped_newton_backslash_Nt=1_timestep1.csv};
			\addlegendentry{HB-I2DRK6-3s};
			
			\addplot table[
					x expr = {\thisrow{epsilon}},
					y expr = {\thisrow{mean_symcond1}}
				]{data/DataCondNr_FSMDRK_HB-I3DRK9-3s_dersol_EJ_PR_Tend=1.25_splitting=0_damped_newton_backslash_Nt=1_timestep1.csv};
			\addlegendentry{HB-I3DRK9-3s};	
		
		\end{loglogaxis}
		\end{tikzpicture}
	  }
	  {
	  \includegraphics{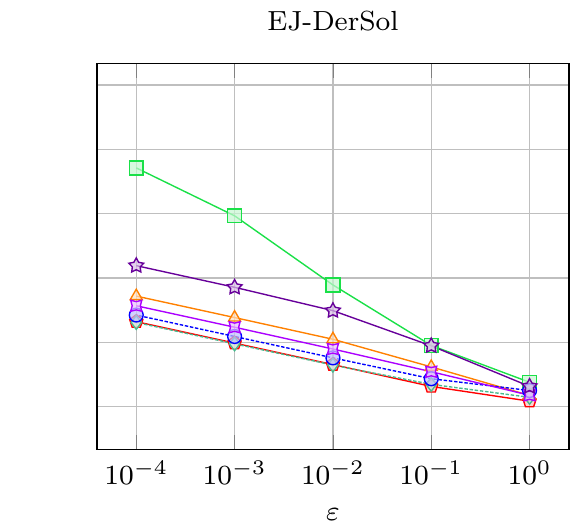}
	  }%
\end{subfigure}%
\begin{subfigure}{0.325\textwidth}
  \centering
  \ifthenelse{\boolean{compilefromscratch}}{
        \tikzsetnextfilename{DataCondNr_FSMDRK-DerSol_rec_PR_Tend=1.25}
		\begin{tikzpicture}
		\begin{loglogaxis}[
			title={rec-DerSol},
			height=5.5cm,
			xlabel={$\varepsilon$},
			ylabel={\phantom{$\text{cond}(\mathcal{F}')$}},
			xtick={1e0,1e-1,1e-2,1e-3,1e-4},
	        ymax = 1e17, ymin = 1e-1,
			ytick={1e1,1e4,1e7,1e10,1e13,1e16},
			ymajorticks=false,
			grid=major,
			legend style={nodes={scale=0.95, transform shape}},
			cycle list name = rainbow
			]
			\addplot table[
					x expr = {\thisrow{epsilon}},
					y expr = {\thisrow{mean_symcond1}}
				]{data/DataCondNr_FSMDRK_HB-I2DRK4-2s_dersol_PR_Tend=1.25_splitting=0_damped_newton_backslash_Nt=1_timestep1.csv};
		    \addplot table[
					x expr = {\thisrow{epsilon}},
					y expr = {\thisrow{mean_symcond1}}
				]{data/DataCondNr_FSMDRK_HB-I3DRK6-2s_dersol_PR_Tend=1.25_splitting=0_damped_newton_backslash_Nt=1_timestep1.csv};
		    \addplot table[
					x expr = {\thisrow{epsilon}},
					y expr = {\thisrow{mean_symcond1}}
				]{data/DataCondNr_FSMDRK_HB-I4DRK8-2s_dersol_PR_Tend=1.25_splitting=0_damped_newton_backslash_Nt=1_timestep1.csv};
			\addplot table[
					x expr = {\thisrow{epsilon}},
					y expr = {\thisrow{mean_symcond1}}
				]{data/DataCondNr_FSMDRK_SSP-I2DRK3-2s_dersol_PR_Tend=1.25_splitting=0_damped_newton_backslash_Nt=1_timestep1.csv};
		    \addplot table[
					x expr = {\thisrow{epsilon}},
					y expr = {\thisrow{mean_symcond1}}
				]{data/DataCondNr_FSMDRK_SSP-I2DRK4-5s_dersol_PR_Tend=1.25_splitting=0_damped_newton_backslash_Nt=1_timestep1.csv};
			\addplot table[
					x expr = {\thisrow{epsilon}},
					y expr = {\thisrow{mean_symcond1}}
				]{data/DataCondNr_FSMDRK_HB-I2DRK6-3s_dersol_PR_Tend=1.25_splitting=0_damped_newton_backslash_Nt=1_timestep1.csv};
			\addplot table[
					x expr = {\thisrow{epsilon}},
					y expr = {\thisrow{mean_symcond1}}
				]{data/DataCondNr_FSMDRK_HB-I3DRK9-3s_dersol_PR_Tend=1.25_splitting=0_damped_newton_backslash_Nt=1_timestep1.csv};
		\end{loglogaxis}
		\end{tikzpicture}
	  }
	  {
	  \includegraphics{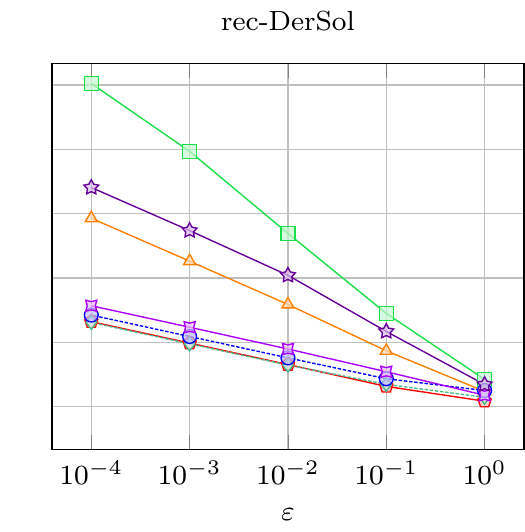}
	  }%
\end{subfigure}
\vspace{-2mm}
\hspace{1.2cm}\ifthenelse{\boolean{compilefromscratch}}{
				\ref{FSMDRK_schemes-dersol-PR}
			  }
	  		  {
			  \includegraphics{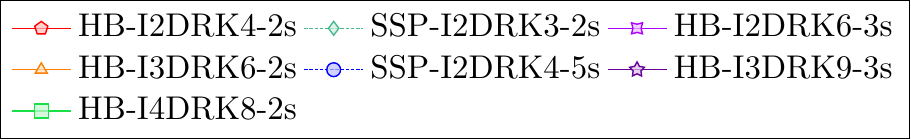}
			  }
			  
\caption{FSMDRK schemes applied as a DerSol method (Figure \ref{fig:AMDRK-scheme}) to the PR problem \eqref{eq:PR}. The average condition number in the 1-norm of the Newton Jacobian is shown for different values of $\varepsilon$. The behavior $\text{cond}(\mathcal{F}') = \mathcal{O}(\varepsilon^{-1})$ is observed for the A-DerSol and EJ-DerSol methods, the rec-DerSol methods seem to behave as $\mathcal{O}(\varepsilon^{-\mathtt{r}+1})$, $\mathtt{r}$ being the amount of derivatives. A single timestep ($N=1$) of size $T_{\text{end}} = 1.25$ ($\Delta t = 1.25$) has been considered with tolerances, Eq. \eqref{eq:criteria_newton}, set to $10^{-12}$ under a maximum of 1000 iterations.}
\label{fig:CondNr-FSMDRK_dersol-Pareschi-Russo}
\end{figure}

Figure \ref{fig:CondNr-FSMDRK_dersol-Pareschi-Russo} displays the average condition numbers $\mu\!\left(\text{cond}(\mathcal{F}')\right)$ for different MDRK schemes. The results are very similar to the ones of the DIMDRK implementation in Figure \ref{fig:CondNr-DIMDRK-dersol-Pareschi-Russo}, thus the previous remarks remaining valid pertaining to the FSMDRK implementation.
It is clear that $\mathcal{F}'(z)$ will quickly grow large for an increasing amount of derivatives $\mathtt{r}$ and stages $\mathtt{s}$, and that this consequently has an impact on the performance of the MDRK method. Still, it might be beneficial to  introduce the additional derivative relations for the overall efficiency of the method. As highlighted before w.r.t. the condition of the block-Jacobian \eqref{eq:jacobianFSMDRK-Direct}, here as well $\cond(\mathcal{F}'(z))$ is strongly dependent on the condition of the seperate blocks. If $\cond(\partial_{z^{n,{\color{VioletRed}\nu}}}\mathcal{F}_{{\color{BlueGreen}l}}) = \mathcal{O}(\varepsilon^{-1})$ can be guaranteed, there might be a significant difference in the total used amount of Newton iterations compared to the Direct counterpart. Furthermore, there is more certainty that the method itself will converge after all, which, for example, is not always the case for A-Direct methods (see Table \ref{tab:newton_cond_DIMDRK_ImplTaylor_order=3_AT_p=1_PR_Tend=1}).

\section{Conclusion and outlook}\label{sec:conclusion}
We have developed a family of implicit Jacobian-free multiderivative Runge-Kutta (MDRK) solvers for stiff systems of ODEs. These so-called AMDRK methods have been tailored to deal with the unwanted outcomes that come from the inclusion of a higher amount of derivatives: (1) each added ${k\text{-th}}$ derivative yields a power term $\varepsilon^k$ in the denominator, and (2) the complexity of the formulas for the derivatives increases rapidly with each derivative order.

When adopting Newton's method as a nonlinear solver, these two negatives become noticeable in the Jacobian: the condition number of the Jacobian grows exponentially with each added derivative, as well as the Jacobian having to be obtained from intricate formulas that request tensor calculations. In order to manage these negatives, the AMDRK methods have been established along the lines of the  Approximate Implicit Taylor method in \citep{2020_Baeza_EtAl}.

First, by adding an additional equation to the ODE system for each derivative, the derivatives become a part of the unknown solution, which we named ${\text{MDRK-DerSol}}$. In this manner, the $\varepsilon$-dependencies are distributed among the newly added relations. Numerically we have shown that this procedure alleviates the exponential growth in the condition number that is typical for direct MDRK methods (correspondingly named MDRK-Direct), for some cases resulting in much less Newton iterations per timestep. Second, by recursively approximating the derivatives using centered differences, no complicated formulas or tensor calculation are needed. The desired convergence order $\min(2p+1, q)$ is achieved, $2p+1$ denoting the amount of stencil points used for the centered differences and $q$ being the order of the MDRK scheme. 

Despite the (A)MDRK-DerSol methods for $\varepsilon \rightarrow 0$ having a more favorable behavior in the condition number in comparison to (A)MDRK-Direct methods, the total system grows in size, and therefore might be less effficient. In order to balance on the one hand the amount of Newton iterations per timestep, and on the other hand the computing time that is needed for solving the linear system, it might be beneficial in the future to establish a threshold value that switches between (A)MDRK-DerSol and (A)MDRK-Direct methods. Such threshold can play a siginificant role when transitioning to parabolic PDEs with viscous effects, where the size of linear systems depends on the spatial resolution. A careful consideration w.r.t. efficiency will be needed in the development of MDRK-DerSol approaches for PDEs with viscous effects.


\section*{Declarations}
\textit{Conflicts of interest} The authors declare that they have no known competing financial interests or personal relationships that could have appeared to influence the work reported in this paper. \\
\\
\textit{Availability of data and material} The datasets generated and/or analyzed during the current study are available from the corresponding author on reasonable request via \url{jeremy.chouchoulis@uhasselt.be}.


\appendix


%

\section{Butcher tableaux}\label{app:ButherTableaux}
All the used multiderivative Runge-Kutta methods in this paper are displayed in this section. A typical multiderivative Runge-Kutta method can be summarized in an extended Butcher tableau of the form as in Table \ref{tab:MDRK_tableau}.

\begin{table}[h!]
\centering
\caption{A general extended Butcher tableau for a multiderivative Runge-Kutta scheme having $\mathtt{r}$ derivatives and $\mathtt{s}$ stages. The associated matrices and vectors are of size $A^{(k)} \in \mathbb{R}^{\mathtt{s}\times \mathtt{s}}$, $b^{(k)} \in \mathbb{R}^{1 \times \mathtt{s}}$ and $c \in \mathbb{R}^{\mathtt{s}\times 1}$ for $k = 1, \dots, \mathtt{r}$.}
\label{tab:MDRK_tableau}
\begin{tabular}{c|c|c|c}
$c$ & $A^{(1)}$ & $\hdots$  & $A^{(\mathtt{r})}$\\ \hline
    & $b^{(1)}$ & $\hdots$  & $b^{(\mathtt{r})}$
\end{tabular}
\end{table}

We use the explicit and implicit Taylor method reformulated as RK scheme, two-derivative Hermite-Birkhoff (HB) schemes taken from \cite{SealSchuetzZeifang21} together with new higher-derivative HB-schemes designed along the same line of reasoning, and Strong-Stability Preserving schemes taken from \cite{2022_Gottlieb_EtAl}. The corresponding Butcher tableaux of the HB schemes have been generated using a short \texttt{MATLAB} code which can be downloaded from the personal webpage of Jochen Sch\"utz at \url{www.uhasselt.be/cmat} or directly from \mbox{\url{http://www.uhasselt.be/Documents/CMAT/Code/generate_HBRK_tables.zip}}.

\begin{table}[h!]
\centering
\caption{$\mathtt{r}$-th order explicit Taylor.}
\label{tab:explTaylor}
\begin{tabular}{c|c|c|c|c}
$0$ & $0$ & $0$  & $\dots$ & $0$\\ \hline
    & $1$ & $1/2$  & $\dots$ & $1/\mathtt{r!}$
\end{tabular}
\end{table}

\begin{table}[h!]
\centering
\caption{$\mathtt{r}$-th order implicit Taylor.}
\label{tab:implTaylor}
\begin{tabular}{c|c|c|c|c}
$1$ & $1$ & $-1/2$  & $\dots$ & $(-1)^{\mathtt{r}+1}/\mathtt{r!}$\\ \hline
    & $1$ & $-1/2$  & $\dots$ & $(-1)^{\mathtt{r}+1}/\mathtt{r!}$
\end{tabular}
\end{table}

\begin{table}[h!]
\centering
\caption{HB-I2DRK4-2s: Fourth order implicit two-derivative Hermite-Birkhoff scheme using two stages \cite{SealSchuetzZeifang21}.}
\label{tab:HB-I2DRK4-2s}
\begin{tabular}{c|cc|cc}
$0$ & $0$   & $0$   & $0$   & $0$ \\
$1$ & $1/2$   & $1/2$   & $1/12$ & $-1/12$ \\ \hline
  & $1/2$   & $1/2$   & $1/12$ & $-1/12$
\end{tabular}
\end{table}

\begin{table}[h!]
\centering
\caption{HB-I2DRK6-3s: Sixth order implicit two-derivative Hermite-Birkhoff scheme using three stages \cite{SealSchuetzZeifang21}.}
\label{tab:HB-I2DRK6-3s}
\begin{tabular}{c|ccc|ccc}
$0$ & $0$   & $0$   & $0$   & $0$  & $0$   & $0$ \\
$1/2$ & $101/480$   & $8/30$ & $55/2400$ & $65/4800$ & $-25/600$ & $-25/8000$ \\
$1$ & $7/30$   & $16/30$   & $7/30$ & $5/300$ & $0$ & $-5/300$ \\ \hline
  & $7/30$   & $16/30$   & $7/30$ & $5/300$ & $0$ & $-5/300$ 
\end{tabular}
\end{table}

\begin{table}[h!]
\centering
\caption{HB-I2DRK8-4s: Eighth order implicit two-derivative Hermite-Birkhoff scheme using four stages \cite{SealSchuetzZeifang21}.}
\label{tab:HB-I2DRK8-4s}
\resizebox{\columnwidth}{!}{%
\begin{tabular}{c|cccc|cccc}
$0$ & $0$   & $0$   & $0$   & $0$ &  $0$   & $0$   & $0$   & $0$\\
$1/3$ & $6893/54432$   & $313/2016$   & $89/2016$ & $397/54432$ & $1283/272160$ & $-851/30240$ & $-269/30240$ & $-163/272160$ \\
$2/3$ & $223/1701$ & $20/63$ & $13/63$ & $20/1701$ & $43/8505$ & $-16/945$ & $-19/945$ & $-8/8505$ \\
$1$ & $31/224$ & $81/224$ & $81/224$ & $31/224$ & $19/3360$ & $-9/1120$ & $9/1120$ & $-19/3360$ \\ \hline
  & $31/224$ & $81/224$ & $81/224$ & $31/224$ & $19/3360$ & $-9/1120$ & $9/1120$ & $-19/3360$ 
\end{tabular}
}
\end{table}

\begin{table}[h!]
\centering
\caption{HB-I3DRK6-2s: Sixth order implicit three-derivative Hermite-Birkhoff scheme using two stages.}
\label{tab:HB-I3DRK6-2s}
\begin{tabular}{c|cc|cc|cc}
$0$ & $0$   & $0$   & $0$   & $0$ & $0$   & $0$\\
$1$ & $1/2$   & $1/2$   & $1/10$ & $-1/10$ & $1/120$ & $1/120$ \\ \hline
  & $1/2$   & $1/2$   & $1/10$ & $-1/10$ & $1/120$ & $1/120$
\end{tabular}
\end{table}

\begin{table}[h!]
\centering
\caption{HB-I3DRK9-3s: Ninth order implicit three-derivative Hermite-Birkhoff scheme using three stages.}
\label{tab:HB-I3DRK9-3s}
\resizebox{\columnwidth}{!}{%
\begin{tabular}{c|ccc|ccc|ccc}
$0$ & $0$   & $0$   & $0$   & $0$ & $0$   & $0$   & $0$ & $0$   & $0$ \\
$1/2$ & $5669/26880$   & $32/105$   & $-421/26880$ & $303/17920$ & $-1/32$ & $47/17920$ & $169/322560$ & $1/315$ & $-41/322560$ \\
$1$ & $41/210$ & $64/105$ & $41/210$ & $1/70$ & $0$ & $-1/70$ & $1/2520$ & $2/315$ & $1/2520$ \\ \hline
  & $41/210$ & $64/105$ & $41/210$ & $1/70$ & $0$ & $-1/70$ & $1/2520$ & $2/315$ & $1/2520$
\end{tabular}
}
\end{table}

\begin{table}[h!]
\centering
\caption{HB-I4DRK8-2s: Eighth order implicit four-derivative Hermite-Birkhoff scheme using two stages.}
\label{tab:HB-I4DRK8-2s}
\begin{tabular}{c|cc|cc|cc|cc}
$0$ & $0$   & $0$   & $0$   & $0$ & $0$   & $0$   & $0$   & $0$  \\
$1$ & $1/2$   & $1/2$   & $3/28$ & $-3/28$ & $1/84$ & $1/84$ & $1/1680$ & $-1/1680$ \\ \hline
  & $1/2$   & $1/2$   & $3/28$ & $-3/28$ & $1/84$ & $1/84$ & $1/1680$ & $-1/1680$ 
\end{tabular}
\end{table}

\begin{table}[h!]
\centering
\caption{SSP-I2DRK3-2s: Third order implicit two-derivative Strong-Stability Preserving scheme using two stages \cite{2022_Gottlieb_EtAl}.}
\label{tab:SSP-I2DRK3-2s}
\begin{tabular}{c|cc|cc}
$0$ & $0$   & $0$   & $-1/6$   & $0$ \\
$1$ & $0$   & $1$   & $-1/6$ & $-1/3$ \\ \hline
  &  $0$   & $1$   & $-1/6$ & $-1/3$ 
\end{tabular}
\end{table}

\begin{table}[h!]
\centering
\caption{SSP-I2DRK4-5s: Fourth order implicit two-derivative Strong-Stability Preserving scheme using five stages \cite{2022_Gottlieb_EtAl}.}
\label{tab:SSP-I2DRK4-5s}
\resizebox{\columnwidth}{!}{%
\begin{tabular}{c|ccccc}
$0.660949255604937$ & $0.660949255604937$ & $0$   & $0$   & $0$  & $0$ \\
$0.903150646005785$ & $0.660949255604937$ & $0.242201390400848$ & $0$ & $0$ & $0$ \\
$2.020339810245656$ & $0.660949255604937$ & $0.221847558352979$ & $1.137542996287740$ & $0$ & $0$ \\
$0.374733308278053$ & $0.060653001401867$ & $0.020022818960029$ & $0.102668776898047$ & $0.191388711018110$ & $0$ \\
$1.000000000000000$ & $0.060653001401867$ & $0.020022818960029$ & $0.102668776898047$ & $0.191388711018110$ & $0.625266691721946$ \\ \hline
  & $0.060653001401867$ & $0.020022818960029$ & $0.102668776898047$ & $0.191388711018110$ & $0.625266691721946$
\end{tabular}
}\vspace{1em}
\resizebox{\columnwidth}{!}{
\begin{tabular}{c|ccccc}
\phantom{$0.660949255604937$} & $-0.177750705279127$ & $0$   & $0$   & $0$  & $0$ \\
 & $-0.177750705279127$ & $-0.354733903778084$ & $0$ & $0$ & $0$ \\
 & $-0.177750705279127$ & $-0.324923198367868$ & $-0.403963513682271$ & $0$ & $0$ \\
 & $-0.016311560509453$ & $-0.029325895786881$ & $-0.036459667895230$ & $-0.161628266349058$ & $0$ \\
 & $-0.016311560509453$ & $-0.029325895786881$ & $-0.036459667895230$ & $-0.161628266349058$ & $-0.218859021269943$ \\ \hline
  & $-0.016311560509453$ & $-0.029325895786881$ & $-0.036459667895230$ & $-0.161628266349058$ & $-0.218859021269943$ 
\end{tabular}
}
\end{table}

\bibliographystyle{elsarticle-num-names} 
\bibliography{ListPaper}





\end{document}

\endinput